\documentclass[reqno]{amsart}

\newtheorem{theorem}{Theorem}[section]
\newtheorem{lemma}[theorem]{Lemma}
\newtheorem{corollary}[theorem]{Corollary}
\newtheorem{proposition}[theorem]{Proposition}

\theoremstyle{definition}

\usepackage[T1]{fontenc}
\usepackage[utf8]{inputenc}
\usepackage{lmodern}
\usepackage{textcomp}
\usepackage{microtype}
\usepackage{tikz}
\usepackage{mathtools, bm, nccmath}

\newcounter{proofstep}

\usepackage[english]{babel}
\usepackage{csquotes}
\usepackage[export]{adjustbox}
\usepackage{circuitikz}

\usepackage{tabularx}

\usepackage{graphicx}
\usepackage{setspace}

\usepackage{amsmath}
\usepackage{amsfonts}
\usepackage{amssymb}
\usepackage{amsthm}
\usepackage{xcolor}%
\usepackage{soul}
\usepackage{graphicx} 
\usepackage{lipsum}
\numberwithin{equation}{section}

\newcommand*{\R}{\mathbb{R}}
\newcommand*{\C}{\mathbb{C}}

\usepackage{thmtools}

\declaretheorem[
	name=Remark,
	style=remark,
	numbered=no
	]{rem}

\newcounter{thmletter}

\usepackage[pdftex,pdfpagelabels]{hyperref}
\numberwithin{equation}{section}

\newcommand\restr[2]{{
  \left.\kern-\nulldelimiterspace 
  #1 
  \littletaller 
  \right|_{#2} 
  }}

\newcommand{\littletaller}{\mathchoice{\vphantom{\big|}}{}{}{}}

\makeatletter
\@namedef{subjclassname@2020}{\textup{2020} Mathematics Subject Classification}
\makeatother

\calclayout

\begin{document}
\title[Eigenvalue bounds for generalized Schr\"odinger operators]
{Resolvent bounds and eigenvalue estimates of generalized Schrödinger operators with complex potentials on compact manifolds}

\author[E. Stefanescu]{Eduard Stefanescu}
\address{Institut f\"ur Analysis und Zahlentheorie, TU Graz, Steyrergasse 30, 8010 Graz, Austria}
\email{\href{mailto:eduard.stefanescu@tugraz.at}{eduard.stefanescu@tugraz.at}}

\subjclass[2020]{35P05, 47A10, 47A75; 35P15, 81Q10.}
\keywords{Schr\"odinger operator, spectral theory, eigenvalues, non-self-adjoint operators}

\begin{abstract}

We extend Cuenin's compact-manifold spectral bounds for Schrödinger operators
with complex potentials to a general pseudodifferential setting. More precisely,
we study operators \(P+V\), where \(P\) is a positive self-adjoint elliptic
classical pseudodifferential operator of positive order and \(V\) is
complex-valued.

The main analytic input is a resolvent principle showing that spectral cluster
estimates for \(P\) imply \(L^p\)-\(L^{p'}\) resolvent estimates along suitable
complex curves. Combined with Sogge's spectral cluster bounds, this yields
exterior-region resolvent estimates extending those of Krupchyk and Uhlmann;
we also prove direct resolvent bounds in the interior region. On Zoll
manifolds, we discuss the sharpness of the resulting spectral bounds.
\end{abstract}

\maketitle




\section{Introduction}

Uniform resolvent estimates are a fundamental tool in the spectral theory of
Schrödinger operators and in the study of eigenvalue bounds for non-self-adjoint
perturbations. On compact Riemannian manifolds, resolvent estimates for the
Laplace--Beltrami operator were established by Dos Santos Ferreira, Kenig and
Salo \cite{DosSantosFerreiraKenigSalo2014}. More precisely, they proved
\(L^p\)-resolvent estimates for \((-\Delta_g-z)^{-1}\) in the Sobolev-critical
range, with the spectral parameter \(z\) restricted to the exterior of a
parabolic neighbourhood of the positive real axis. The endpoint case was later
obtained by Frank and Schimmer \cite{FRANKSchimmer2017}. For higher order
elliptic self-adjoint differential operators on compact manifolds,
corresponding estimates were proved by Krupchyk and Uhlmann \cite{KU15} under a
suitable curvature assumption on the characteristic hypersurfaces.

These results are part of a broader circle of uniform Sobolev and resolvent
estimates going back to Kenig, Ruiz and Sogge
\cite{KenigRuizSogge1987}. Further developments include the work of
Bourgain, Shao, Sogge and Yao \cite{BourgainShaoSoggeYao2015}, which refines
the connection between spectral cluster estimates and resolvent bounds on
compact manifolds. Related uniform Sobolev estimates in the periodic setting
were obtained by Shen and Zhao \cite{ShenZhao08}.

A different and conceptually useful point of view was introduced by Cuenin in
\cite{CUENIN2024110214}. He showed that, in several relevant situations,
uniform resolvent estimates can be derived directly from spectral cluster
estimates. In particular, Sogge's spectral cluster bounds for the
Laplace--Beltrami operator imply the corresponding uniform resolvent estimates
without relying on the explicit construction of a Hadamard parametrix. This
functional-analytic approach is especially well suited for extensions to
operators for which suitable spectral cluster estimates are already available.

The aim of the present paper is to extend this circle of ideas to a class of
self-adjoint elliptic classical pseudodifferential operators. Let \(P\) be a
positive self-adjoint elliptic classical pseudodifferential operator of order
\(m>0\) on a closed \(d\)-dimensional manifold \(M\). We assume that the
principal symbol \(p_m(x,\xi)\) is positive for \(\xi\neq 0\), homogeneous of
degree \(m\) in \(\xi\), and that the associated cospheres
\[
    \Sigma_x
    :=
    \{\xi\in T_x^*M : p_m(x,\xi)=1\}
\]
have everywhere non-vanishing Gaussian curvature. Equivalently, after passing
to the first-order operator \(P^{1/m}\), this is the curvature condition under
which spectral cluster estimates for functions of pseudodifferential operators
apply; see \cite[Chapter 3-5]{CS} or \cite{Sogge1986}.

Our first objective is to prove \(L^p\)-resolvent estimates for
\[
    (P-z)^{-1}
\]
in regions of the complex plane adapted to the order \(m\). The initial
estimates are obtained on a contour of the form
\[
    \Gamma
    =
    \{(\lambda+i)^m:\lambda\geq \cot(\pi/m)\}
    \cup
    \{(\lambda-i)^m:\lambda\geq \cot(\pi/m)\},
\]
where \(\lambda\) is the spectral parameter of \(P^{1/m}\). The bounds on
\(\Gamma\) are derived from spectral cluster estimates and Sobolev embeddings.
They are then extended to the exterior of the contour by a
Phragmén--Lindelöf argument. Near the singularities, that is, close to the
spectrum of \(P\), the bounds are obtained by combining spectral cluster
estimates with elementary estimates on the geometry of the complex parameter.
This strategy is strongly inspired by Cuenin's works \cite{CJC} and \cite{CUENIN2024110214}.

As an application, we study non-self-adjoint Schrödinger-type operators
\[
    P+V,
\]
where \(V\) is a complex-valued potential. Using the resolvent estimates for
\(P\) together with the Birman--Schwinger principle, we prove spectral
inclusion bounds for the eigenvalues of \(P+V\). In particular, the spectrum
is shown to lie in a union of complex discs centred at the eigenvalues of the
unperturbed operator \(P\). The radii of these discs are controlled by
Lebesgue norms of \(V\) and by the spectral cluster exponent associated with
\(P\).

We also discuss sharpness. In the model case of the Laplace--Beltrami operator,
and more generally for operators with Zoll-type spectral clustering, the high
multiplicity and clustering structure of the spectrum leads to examples showing
that the radii of the spectral inclusion discs cannot, in general, be improved.

The eigenvalue bounds proved here continue a line of work on Schrödinger
operators with real and complex potentials. In the self-adjoint case, such
questions go back at least to Keller's eigenvalue estimates \cite{K61}. For
complex potentials, early bounds for non-real eigenvalues were obtained by
Abramov, Aslanyan and Davies \cite{AAD}. A fundamental result in the Euclidean
setting is Frank's eigenvalue bound for Schrödinger operators with complex
potentials \cite{FR1}, while Cuenin later proved a compact-manifold analogue
for the Laplace--Beltrami operator \cite{CJC}. The present paper extends this
compact-manifold framework to positive self-adjoint elliptic classical
pseudodifferential operators satisfying the above curvature condition. For a
recent survey by the author of deterministic and random eigenvalue bounds for
non-self-adjoint Schrödinger operators, see \cite{StefanescuSurvey}.

In the Euclidean setting, Laptev and Safronov conjectured that such eigenvalue
bounds should hold in a larger range of exponents. This conjecture was disproved
by Frank and Simon for embedded positive eigenvalues \cite{FRS2}, and by
Bögli and Cuenin in the remaining complex-eigenvalue regime \cite{CJCBS}. Thus
Franks condition is sharp in general. 

Further related developments include improved eigenvalue bounds for sparse complex potentials \cite{CueninSparse}, random Schrödinger operators with complex potentials \cite{CJCMK,3KMJCCKyoto,CMS}.

\subsection{Structure of the paper}

The main results are stated in Section~\ref{sec:main-results}. Section~\ref{sec:preliminaries} contains the notation and
preliminary material needed throughout the paper. The boundary resolvent estimates are proved in Section~\ref{sec6}. The
Phragmén--Lindelöf extension and the bounds near the spectrum are proved in
Sections~\ref{sec: complex ext} and~\ref{sec: boundsnearsing}, respectively. In Section~\ref{sec7}, we use these resolvent estimates
together with the Birman--Schwinger principle to prove the spectral inclusion
results. Finally, Sections~\ref{secopt} and \eqref{sec: optimality laplace} is devoted to the sharpness of the obtained bounds
on the sphere and, more generally, on Zoll manifolds.

\section{Main results}\label{sec:main-results}

\subsection*{Standing assumptions}

Throughout the main results, \(M\) is a smooth closed \(d\)-dimensional Riemannian
manifold, \(d\geq 2\), and
\[
    P\in\Psi_{\mathrm{cl}}^m(M),
    \qquad
    \frac{2d}{d+1}\leq m\leq d,
\]
is a positive self-adjoint elliptic classical pseudodifferential operator. Its
principal symbol is denoted by
\[
    p_m(x,\xi):=\sigma_m(P)(x,\xi),
\]
and we assume
\[
    p_m(x,\xi)>0,
    \qquad
    (x,\xi)\in T^*M\setminus 0 .
\]
For each \(x\in M\), define the cosphere
\begin{equation}\label{eq: cosphere}
    \Sigma_x:=\{\xi\in T_x^*M:p_m(x,\xi)=1\}.
\end{equation}
We assume that \(\Sigma_x\) has everywhere non-vanishing Gaussian curvature.

The exponents \(p,p'\) are
Hölder conjugates,
\[
    \frac1p+\frac1{p'}=1,
\]
and we assume that \(p'\) lies in the Sobolev-admissible range
\begin{equation}\label{eq: admissible}
    2\le p'< \frac{2d}{d-m},
    \qquad
    \text{if } m<d,
\end{equation}
and
\[
    2 \le p'<\infty,
    \qquad
    \text{if } m=d.
\]
For \(2\leq p'\leq\infty\), we define
\begin{equation}\label{eq: nu}
    \nu(p')
    :=
    \begin{cases}
    \displaystyle
    \frac{d-1}{2}\left(\frac12-\frac1{p'}\right),
    &
    2\leq p'\leq \frac{2(d+1)}{d-1},
    \\[1.4ex]
    \displaystyle
    d\left(\frac12-\frac1{p'}\right)-\frac12,
    &
    \frac{2(d+1)}{d-1}\leq p'\leq\infty.
    \end{cases}
\end{equation}
For the eigenvalue bounds with potentials \(V\in L^q(M)\), the exponent \(q\)
is related to \(p,p'\) by
\[
    \frac1q=\frac1p-\frac1{p'}.
\]
The admissibility condition on \(p'\) is equivalent to
\[
    \frac dm<q\leq \infty,
    \qquad
    \text{if } m<d,
\]
and
\[
    1<q\leq\infty,
    \qquad
    \text{if } m=d.
\]
For \(m=d\), this condition reads \(q>1\), equivalently
\(p'<\infty\).

In this parametrization we set
\[
    \sigma(q):=\nu\!\left(\frac{2q}{q-1}\right)=\nu(p').
\]
Equivalently,
\[
    \sigma(q)
    =
    \begin{cases}
    \displaystyle
    \frac{d}{2q}-\frac12,
    &
    \displaystyle
    \frac dm< q\leq \frac{d+1}{2},
    \\[1.4ex]
    \displaystyle
    \frac{d-1}{4q},
    &
    \displaystyle
    \frac{d+1}{2}\leq q\leq\infty.
    \end{cases}
\]

For notational convenience, we include the point \(0\) among the centres in the
spectral inclusion theorem. Thus, in the statement below, we use the convention $\lambda_0:=0$, even if \(0\notin\operatorname{spec}(P)\). For \(k\geq 1\), the numbers
\(\lambda_k^m\) denote the non-zero eigenvalues of \(P\), counted with
multiplicity and arranged in non-decreasing order. If \(0\) is an eigenvalue,
this convention merely suppresses possible repetitions of the centre \(0\);
if \(0\notin\operatorname{spec}(P)\), it adds one harmless auxiliary disc
centred at the origin.

Our main spectral inclusion result is the following.

\begin{theorem}\label{psithm1}
Assume the standing assumptions. Then there exists a constant
\(C=C(M,g,P,q,m)>0\) such that, for every \(V\in L^q(M)\),
\begin{equation}\label{result}
    \operatorname{spec}(P+V)
    \subset
    \bigcup_{k=0}^\infty
    D\bigl(\lambda_k^m, C r_k\bigr)
    \;\cup\;
    \left\{
        z\in\mathbb C :
        |z|^{1-\frac1m}
        (1+|z|)^{-\frac{2\sigma(q)}{m}}
        \leq C \|V\|_{L^q(M)}
    \right\},
\end{equation}
where
\[
    r_k
    :=
    \|V\|_{L^q(M)}(1+\lambda_k)^{2\sigma(q)}.
\]
\end{theorem}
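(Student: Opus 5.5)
The plan is to use the Birman--Schwinger principle together with two resolvent bounds: one on the exterior region and one near the spectrum. Factor \(V=V_1V_2\) with \(V_1=|V|^{1/2}\operatorname{sgn}V\) and \(V_2=|V|^{1/2}\). If \(z\notin\operatorname{spec}(P)\) is an eigenvalue of \(P+V\), then \(-1\) is an eigenvalue of \(V_2(P-z)^{-1}V_1\) on \(L^2(M)\). Hölder's inequality with \(\frac1q=\frac1p-\frac1{p'}\) gives
\[
\|V_2(P-z)^{-1}V_1\|_{L^2\to L^2}\le \|V\|_{L^q(M)}\,\|(P-z)^{-1}\|_{L^p\to L^{p'}} ,
\]
since multiplication by \(V_j\) maps \(L^{p'}\to L^2\) and \(L^2\to L^{p}\) with norm \(\|V\|_{L^q}^{1/2}\). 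So every such eigenvalue satisfies \(1\le \|V\|_{L^q}\|(P-z)^{-1}\|_{L^p\to L^{p'}}\). The eigenvalues of \(P\) themselves lie in the discs \(D(\lambda_k^m,Cr_k)\), so they cause no trouble. It therefore suffices to prove, for every \(z\) outside \(\bigcup_k D(\lambda_k^m,Cr_k)\), a bound of the form
\[
\|(P-z)^{-1}\|_{L^p\to L^{p'}}\lesssim \max\Bigl( |z|^{-(1-\frac1m)}(1+|z|)^{\frac{2\sigma(q)}{m}},\ \sup_k \operatorname{dist}\bigl(z,\lambda_k^m\bigr)^{-1}(1+\lambda_k)^{2\sigma(q)} \Bigr),
\]
with the second alternative only needed near the spectrum.

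\emph{Exterior region.} Write \(z=w^m\), where \(w\) is the branch with \(\operatorname{Re}w\ge0\) closest to the positive axis. The region outside the contour \(\Gamma\) corresponds to \(|\operatorname{Im}w|\ge1\). Here I would invoke the resolvent bounds on \(\Gamma\) from Section~\ref{sec6} and their extension off \(\Gamma\) via the Phragmén--Lindelöf argument of Section~\ref{sec: complex ext}. These give
\[
\|(P-z)^{-1}\|_{p\to p'}\lesssim |w|^{2\nu(p')-(m-1)} = |z|^{-(1-\frac1m)}|z|^{\frac{2\sigma(q)}{m}} .
\]
Heuristically, the spectral cluster bound gives \(\lambda^{2\nu}\) per unit window, and the factor \(|P^{1/m}-w|^{-1}\) summed over windows, divided by \(|w|^{m-1}\), gives the derivative of \(\lambda^m\). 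Combined with the Birman--Schwinger criterion, this confines eigenvalues in this region to the last set in \eqref{result}. For bounded \(|z|\), where the factor \((1+|z|)\) matters, the Sobolev admissibility of \(p'\) gives boundedness of \(P^{-1}\)-type pieces. Near \(z=0\) the set \(\{|z|^{1-1/m}\lesssim\|V\|\}\) absorbs that region, together with the auxiliary disc at \(\lambda_0=0\).

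\emph{Interior region.} When \(|\operatorname{Im}w|<1\), I would use the bounds near the singularities from Section~\ref{sec: boundsnearsing}. Split \(P\) spectrally into the unit cluster \(\chi_\lambda\) containing \(\operatorname{Re}w\) (plus its neighbours) and the remainder. For the remainder, the same window summation as in the exterior argument, now with distance at least \(1\) from \(w\), gives the exterior-type bound. For the near clusters, write
\[
(P-z)^{-1}\chi_\lambda=\sum_{\lambda_k\in[\lambda,\lambda+1)}(\lambda_k^m-z)^{-1}\,e_k\otimes\overline{e_k} .
\]
Estimating this crudely via the \(TT^*\) cluster bound \(\|\chi_\lambda\|_{2\to p'}^2\lesssim\lambda^{2\nu(p')}\) yields \(\sup_k|\lambda_k^m-z|^{-1}(1+\lambda_k)^{2\sigma(q)}\). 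If \(z\notin D(\lambda_k^m,Cr_k)\) for all \(k\), this is at most \(\|V\|^{-1}/C\), and Birman--Schwinger excludes \(z\).

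The main obstacle is this last step. Bounding the near-cluster piece by the \emph{supremum} over the cluster, rather than by a sum of rank-one norms, needs care. I would factor it as \(\chi_\lambda\, f(P)\,\chi_\lambda\) with \(\|f(P)\|_{2\to2}=\max_k|\lambda_k^m-z|^{-1}\), which gives exactly the desired form by the \(TT^*\) argument. I would also need to check that comparable \(\lambda_k\) within one cluster makes the weights \((1+\lambda_k)^{2\sigma}\) interchangeable.
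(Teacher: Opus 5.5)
Your proposal is correct and follows the paper's proof. It combines the Birman--Schwinger principle and the H\"older bound $1\le\|V\|_{L^q}\|(P-z)^{-1}\|_{L^p\to L^{p'}}$ with \eqref{eqithm-region} on $\Xi$ and the near-spectrum estimates of Section~\ref{sec: boundsnearsing} off $\Xi$; the cluster piece is handled exactly by your factorization $\chi_\lambda f(P)\chi_\lambda$, as in \eqref{Soggeequationchain}.

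You keep the exterior-type term as a separate alternative instead of collapsing everything into $d(z)^{-1}(1+|z|)^{2\nu(p')/m}$, which places $z$ directly in some $D(\lambda_k^m,Cr_k)$ without the paper's Weyl-law step. Be aware, though, that the off-cluster remainder bound comes from the paper's comparison with a point $z_1\in\Gamma$ plus the resolvent identity, not from a plain triangle-inequality sum over unit windows, which would lose a factor $\log|z|$.
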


The sharpness argument requires more than the curvature assumption on the
principal cospheres. In addition, the spectrum of the first-order operator
must exhibit sufficiently narrow clusters, and the corresponding spectral
projectors must saturate the spectral-cluster estimates. We formulate the
argument under abstract assumptions which include both the round sphere and
fractional powers of the Laplace--Beltrami operator on Zoll manifolds.

\begin{theorem}[Sharpness under Zoll-type spectral clustering]
\label{thmopt}
Assume the standing assumptions and let
\[
    A:=P^{1/m}.
\]
Suppose that there exist numbers
$\Lambda_k\asymp k$ and $\delta_k\ge 0$, such that the intervals
\[
    I_k:=
    \left[\Lambda_k-\frac12,\Lambda_k+\frac12\right)
\]
are mutually disjoint for all sufficiently large \(k\), and such that
\begin{equation}
\label{eq:zoll-type-clustering}
    \operatorname{spec}(A)\cap I_k
    \subset
    [\Lambda_k-\delta_k,\Lambda_k+\delta_k].
\end{equation}
Let $\Pi_k:=\boldsymbol{1}_{I_k}(A)$ be the corresponding spectral projector.
Assume, moreover, that
\begin{equation}
\label{eq:sharp-cluster-lower}
    \|\Pi_k\|_{L^p(M)\to L^{p'}(M)}
    \gtrsim
    k^{2\sigma(q)}.
\end{equation}
Let \(\theta\in[0,2\pi)\), and let
\(\eta_k>0\) satisfy
\begin{equation}
\label{eq:sharpness-smallness}
    \varepsilon_k
    :=
    \eta_k k^{2\sigma(q)+1-m}
    +
    \frac{k^{m-1}\delta_k}
         {\eta_k k^{2\sigma(q)}}
    \longrightarrow0.
\end{equation}
Then, for every sufficiently large \(k\), there exists a potential
\[
    V_k\in L^q(M),
    \qquad
    \|V_k\|_{L^q(M)}\asymp\eta_k,
\]
such that \(P+V_k\) has an eigenvalue \(z_k\) satisfying
\begin{equation}
\label{eq:abstract-sharpness-conclusion}
    z_k
    =
    \Lambda_k^m
    +
    \eta_k e^{i\theta}k^{2\sigma(q)}
    +
    O\left(
        \eta_k^2 k^{4\sigma(q)+1-m}
        +
        k^{m-1}\delta_k
    \right).
\end{equation}
In particular,
\[
    z_k
    =
    \Lambda_k^m
    +
    \eta_k e^{i\theta}
    \bigl(1+o(1)\bigr)k^{2\sigma(q)}.
\]

For \(q=\infty\), the sharpness statement holds trivially by taking a
constant potential.
\end{theorem}

\begin{corollary}[Fractional Laplacian on the round sphere]
\label{cor:sharpness-sphere}
Let
\[
    M=\mathbb S^d,
    \qquad
    P_m=(-\Delta_{\mathbb S^d})^{m/2}.
\]
For every \(\eta>0\), \(\theta\in[0,2\pi)\), and
\[
    k\gg
    1+\eta^{1/(m-1-2\sigma(q))},
\]
there exists
\[
    V_{k,\theta,\eta}\in L^q(\mathbb S^d),
    \qquad
    \|V_{k,\theta,\eta}\|_{L^q(\mathbb S^d)}
    \asymp\eta,
\]
such that
\begin{align}
\label{eq:sphere-sharpness-final}
    &\bigl(k(k+d-1)\bigr)^{m/2}
    +
    \eta e^{i\theta}
    \left(
        1+
        O\bigl(
            \eta k^{2\sigma(q)+1-m}
        \bigr)
    \right)
    k^{2\sigma(q)}
    \notag\\
    &\hspace{4cm}
    \in
    \operatorname{spec}
    \bigl(P_m+V_{k,\theta,\eta}\bigr).
\end{align}
Consequently, the radii
\[
    \|V\|_{L^q(\mathbb S^d)}
    k^{2\sigma(q)}
\]
in the spectral inclusion theorem are optimal up to multiplicative constants.
\end{corollary}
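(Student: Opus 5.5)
We apply Theorem~\ref{thmopt} with $A=(-\Delta_{\mathbb S^d})^{1/2}$, so that $P_m=A^m$. On the round sphere the eigenvalues of $A$ are $\mu_k=\sqrt{k(k+d-1)}$. Set $\Lambda_k:=\mu_k$. Since $\mu_k=k+\frac{d-1}{2}+O(1/k)$, consecutive values are separated by $1+O(1/k^2)$. Hence the intervals $I_k=[\Lambda_k-\frac12,\Lambda_k+\frac12)$ are mutually disjoint for large $k$ and each contains exactly the single eigenvalue $\mu_k$. This gives \eqref{eq:zoll-type-clustering} with $\delta_k=0$. Then $\Pi_k$ is the orthogonal projector $H_k$ onto spherical harmonics of degree $k$, and $\Lambda_k^m=(k(k+d-1))^{m/2}$. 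The standing assumptions hold because $p_m(x,\xi)=|\xi|_g^m$, whose cospheres are round and hence have non-vanishing curvature.

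The main input is the lower bound \eqref{eq:sharp-cluster-lower}. Since $H_k=H_k^2=H_k^*$, the $TT^*$ argument gives
\[
\|H_k\|_{L^p\to L^{p'}}=\|H_k\|_{L^2\to L^{p'}}^2 .
\]
So it suffices to know Sogge's sharp lower bounds $\|H_k\|_{L^2\to L^{p'}}\gtrsim k^{\nu(p')}$. These are realized by zonal harmonics for $p'\ge \frac{2(d+1)}{d-1}$ and by highest weight harmonics (Gaussian beams along a great circle) for $2\le p'\le\frac{2(d+1)}{d-1}$. Squaring yields $k^{2\nu(p')}=k^{2\sigma(q)}$. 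This step is the only place where genuine sphere-specific information enters. I expect it to be the main point to check carefully, in particular that the normalizations in $\nu$ match the two regimes and that the endpoint is covered by either family.

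Next, with $\delta_k=0$, fix $\eta>0$ and take $\eta_k\equiv\eta$. Then $\varepsilon_k=\eta k^{2\sigma(q)+1-m}$, and this tends to $0$ precisely when $k\gg \eta^{1/(m-1-2\sigma(q))}$. Here we use $m-1-2\sigma(q)>0$, which I would verify from the admissible range $q>d/m$: in the first regime $2\sigma=d/q-1<m-1$, and in the second $2\sigma=\frac{d-1}{2q}\le\frac{d-1}{d+1}<m-1$ since $m\ge\frac{2d}{d+1}$. The implicit constants in Theorem~\ref{thmopt} should depend only on the smallness of $\varepsilon_k$, so "sufficiently large $k$" translates into the stated condition. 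Theorem~\ref{thmopt} then provides $V_k$ with $\|V_k\|_{L^q}\asymp\eta$ and an eigenvalue
\[
z_k=\Lambda_k^m+\eta e^{i\theta}k^{2\sigma(q)}+O\bigl(\eta^2k^{4\sigma(q)+1-m}\bigr).
\]
Factoring out $\eta e^{i\theta}k^{2\sigma(q)}$ gives exactly \eqref{eq:sphere-sharpness-final}.

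Finally, for optimality, note that $\lambda_k\asymp k$ on the sphere. Hence the radius $r_k=\|V\|_{L^q}(1+\lambda_k)^{2\sigma(q)}\asymp\eta k^{2\sigma(q)}$ in Theorem~\ref{psithm1} matches the distance $|z_k-\Lambda_k^m|\asymp\eta k^{2\sigma(q)}$ of the constructed eigenvalue from the nearest centre. The phase $\theta$ is arbitrary, so such eigenvalues appear in every direction. The gap between neighbouring centres is $\asymp k^{m-1}\gg\eta k^{2\sigma(q)}$, so the eigenvalue cannot be attributed to a different disc. Also $|z_k|\to\infty$ while $|z|^{1-1/m}(1+|z|)^{-2\sigma/m}$ grows, because $2\sigma<m-1$; thus $z_k$ lies outside the exceptional region for large $k$. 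Therefore the radii cannot be improved beyond multiplicative constants.
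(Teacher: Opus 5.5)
Your proposal is correct and follows the paper's proof. You apply Theorem~\ref{thmopt} with $\Lambda_k=\sqrt{k(k+d-1)}$, $\delta_k=0$ and $\eta_k\equiv\eta$, take the lower cluster bound from the sharpness of Sogge's estimates (you make the zonal and highest-weight extremizers explicit), and translate $\varepsilon_k\to0$ into $k\gg1+\eta^{1/(m-1-2\sigma(q))}$. One small slip: at the endpoint $m=\frac{2d}{d+1}$ one has $\frac{d-1}{d+1}=m-1$, so there the strict inequality $2\sigma(q)<m-1$ follows from $q>d/m=\frac{d+1}{2}$, not from $m\ge\frac{2d}{d+1}$.
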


\begin{corollary}[Fractional Laplacian on a Zoll manifold]
\label{cor:sharpness-zoll}
Let \((M,g)\) be a Zoll manifold whose unit-speed geodesics have common
minimal period \(2\pi\), and let
\[
    P_m=(-\Delta_g)^{m/2}.
\]
Let \(\eta>0\) and suppose that
\begin{equation}
\label{eq:zoll-two-conditions}
    \eta k^{2\sigma(q)+1-m}\ll1,
    \qquad
    \eta k^{2\sigma(q)+2-m}\gg1.
\end{equation}
Then, for every \(\theta\in[0,2\pi)\), there exists
\[
    V_{k,\theta,\eta}\in L^q(M),
    \qquad
    \|V_{k,\theta,\eta}\|_{L^q(M)}
    \asymp\eta,
\]
such that \(P_m+V_{k,\theta,\eta}\) has an eigenvalue
\begin{equation}
\label{eq:zoll-sharpness-final}
    z_k
    =
    (k+\alpha)^m
    +
    \eta e^{i\theta}k^{2\sigma(q)}
    +
    O\left(
        \eta^2 k^{4\sigma(q)+1-m}\right)
        +
        k^{m-2}=
    (k+\alpha)^m
    +
    \eta e^{i\theta}
    \bigl(1+o(1)\bigr)k^{2\sigma(q)},
\end{equation}
where by Weinstein  \cite{Weinstein1977Clusters} spec$(\sqrt{-\Delta_g})\ \cap I_k \subset [k+\alpha-C/k,k+\alpha+C/k]$, for some $\alpha\in\R$.
\end{corollary}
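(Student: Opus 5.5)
The plan is to derive Corollary~\ref{cor:sharpness-zoll} directly from Theorem~\ref{thmopt}, taking \(A=\sqrt{-\Delta_g}\), so that \(P_m=A^m\). The standing assumptions hold: \(p_m(x,\xi)=|\xi|_g^m\), and its cospheres are the unit cospheres of \(g\), which have non-vanishing Gaussian curvature. The proof then reduces to checking the three hypotheses of Theorem~\ref{thmopt} and substituting the resulting parameters.

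\emph{Clustering.} Set \(\Lambda_k:=k+\alpha\), so \(\Lambda_k\asymp k\) and \(I_k=[k+\alpha-\tfrac12,k+\alpha+\tfrac12)\). These intervals are mutually disjoint, and Weinstein's cluster theorem gives \eqref{eq:zoll-type-clustering} with \(\delta_k=C/k\).

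\emph{Saturation of the cluster bound.} We need \eqref{eq:sharp-cluster-lower}. Since \(\Pi_k\) is an orthogonal projector, \(\|\Pi_k\|_{L^p\to L^{p'}}=\|\Pi_k\|_{L^2\to L^{p'}}^2\) by the \(TT^*\) identity. It therefore suffices to show \(\|\Pi_k\|_{L^2\to L^{p'}}\gtrsim k^{\sigma(q)}\), which is the statement that Sogge's unit-band spectral cluster bound is attained on Zoll manifolds. The two regimes of \(\nu(p')\) are handled by two different test functions.
\begin{itemize}
\item In the regime \(p'\ge 2(d+1)/(d-1)\), I would use the zonal-type function \(\Pi_k\delta_{x_0}\) (suitably normalised), which concentrates at a point. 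Its size is governed by the on-diagonal spectral function: \(\Pi_k(x_0,x_0)\asymp k^{d-1}\), which follows from Weinstein's clustering together with the Weyl law, since every cluster contains \(\asymp k^{d-1}\) eigenvalues.
\item In the regime \(p'\le 2(d+1)/(d-1)\), I would use Gaussian beams (highest-weight type quasimodes) concentrated along a closed geodesic. These exist and lie essentially inside a single cluster because all geodesics are closed with common period \(2\pi\).
\end{itemize}
This saturation step is the main obstacle. Where possible I would cite Sogge's classical sharpness results for Zoll manifolds, which the paper presumably treats in Section~\ref{secopt}, rather than reprove them.

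\emph{Smallness condition.} Take \(\eta_k=\eta\), constant in \(k\). Then
\[
\varepsilon_k=\eta k^{2\sigma(q)+1-m}+\frac{k^{m-1}\cdot C k^{-1}}{\eta k^{2\sigma(q)}}=\eta k^{2\sigma(q)+1-m}+C\bigl(\eta k^{2\sigma(q)+2-m}\bigr)^{-1}.
\]
Both terms tend to zero exactly under the two conditions \eqref{eq:zoll-two-conditions}, so \eqref{eq:sharpness-smallness} holds.

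\emph{Conclusion.} Theorem~\ref{thmopt} now yields \(V_k\) with \(\|V_k\|_{L^q}\asymp\eta\) and an eigenvalue \(z_k\) of the form \eqref{eq:abstract-sharpness-conclusion}, where the remainder is \(O(\eta^2k^{4\sigma(q)+1-m}+k^{m-2})\). Both error terms are \(o(\eta k^{2\sigma(q)})\); this is precisely the same pair of ratios as in \(\varepsilon_k\). This gives \eqref{eq:zoll-sharpness-final}. For \(q=\infty\), a constant potential works, as stated in the theorem.
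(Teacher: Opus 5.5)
Your proposal is correct and is essentially the paper's proof. You apply Theorem~\ref{thmopt} to $A=\sqrt{-\Delta_g}$ with Weinstein's $\Lambda_k=k+\alpha$, $\delta_k=O(k^{-1})$ and $\eta_k=\eta$, note that $\varepsilon_k=\eta k^{2\sigma(q)+1-m}+C\bigl(\eta k^{2\sigma(q)+2-m}\bigr)^{-1}$ so that \eqref{eq:zoll-two-conditions} is exactly $\varepsilon_k\to0$, and reduce \eqref{eq:sharp-cluster-lower} via $\|\Pi_k\|_{L^p\to L^{p'}}=\|\Pi_k\|_{L^2\to L^{p'}}^2$ to saturating examples.

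Your saturation step is more explicit than the paper's one-line appeal to $\operatorname{rank}\Pi_k\asymp k^{d-1}$, the local Weyl law and $TT^*$ (with a reference to Cuenin). You use point-concentrating functions $\Pi_k(\cdot,x_0)$ for $p'\ge 2(d+1)/(d-1)$ and Gaussian beams along closed geodesics for $p'\le 2(d+1)/(d-1)$, and the Gaussian-beam regime is genuinely needed for $(d+1)/2<q<\infty$. One caveat: $\operatorname{rank}\Pi_k\asymp k^{d-1}$ for every large $k$ comes from the finer Zoll cluster asymptotics (Duistermaat--Guillemin, Weinstein), not from the plain Weyl law, whose $O(k^{d-1})$ remainder is as large as a single cluster.
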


\begin{rem}[More general operators with periodic bicharacteristics]
Theorem~\ref{thmopt} is not restricted to powers of the Laplace--Beltrami
operator. Positive elliptic operators with periodic bicharacteristic flow were studied
by Duistermaat and Guillemin
\cite{DuistermaatGuillemin1975Periodic} and by Colin de Verdière
\cite{ColinDeVerdiere1979Periodic}. Periodicity of the bicharacteristic flow
alone, however, should not be substituted for
\eqref{eq:zoll-type-clustering}: lower-order and subprincipal terms may affect
the location and width of the spectral clusters. The abstract spectral
hypotheses above isolate exactly the properties needed in the sharpness
argument.
\end{rem}

The proof of Theorem~\ref{psithm1} is based on resolvent estimates for the
unperturbed operator \(P\). More precisely, we first prove uniform
\(L^p\)-resolvent bounds along a complex contour adapted to the order \(m\) of
the operator. These estimates generalize the higher-order differential
operator bounds of Cuenin~\cite{CUENIN2024110214} and Krupchyk--Uhlmann~\cite{KU15}
to the present pseudodifferential setting. Following Cuenin's strategy, the
main point is that suitable spectral cluster estimates are sufficient to
obtain the required resolvent bounds.

Define
\[
\Gamma_+
:=
\left\{
(\lambda+i)^m : \lambda\geq \cot\!\left(\frac{\pi}{m}\right)
\right\},
\qquad
\Gamma_-
:=
\left\{
(\lambda-i)^m : \lambda\geq \cot\!\left(\frac{\pi}{m}\right)
\right\},
\]
and set
\[
    \Gamma:=\Gamma_+\cup\Gamma_-.
\]
The two arcs meet at the point
\begin{equation}\label{Gammacontinuous}
    z_\ast
    :=
    \left(\cot\!\left(\frac{\pi}{m}\right)+i\right)^m
    =
    \left(\cot\!\left(\frac{\pi}{m}\right)-i\right)^m
    =
    -\sin\!\left(\frac{\pi}{m}\right)^{-m}.
\end{equation}
    
Thus \(\Gamma\) is a continuous curve in \(\mathbb C\). We define
\(\Xi_0\) to be the connected component of \(\mathbb C\setminus\Gamma\)
which contains the half-line $(-\infty,z_\ast)$.
Finally, we set
\[
    \Xi:=\Xi_0\cup\Gamma,
\]
see Figure~\ref{fig}.

\begin{figure}
    \centering
    \includegraphics[width=0.75\linewidth]{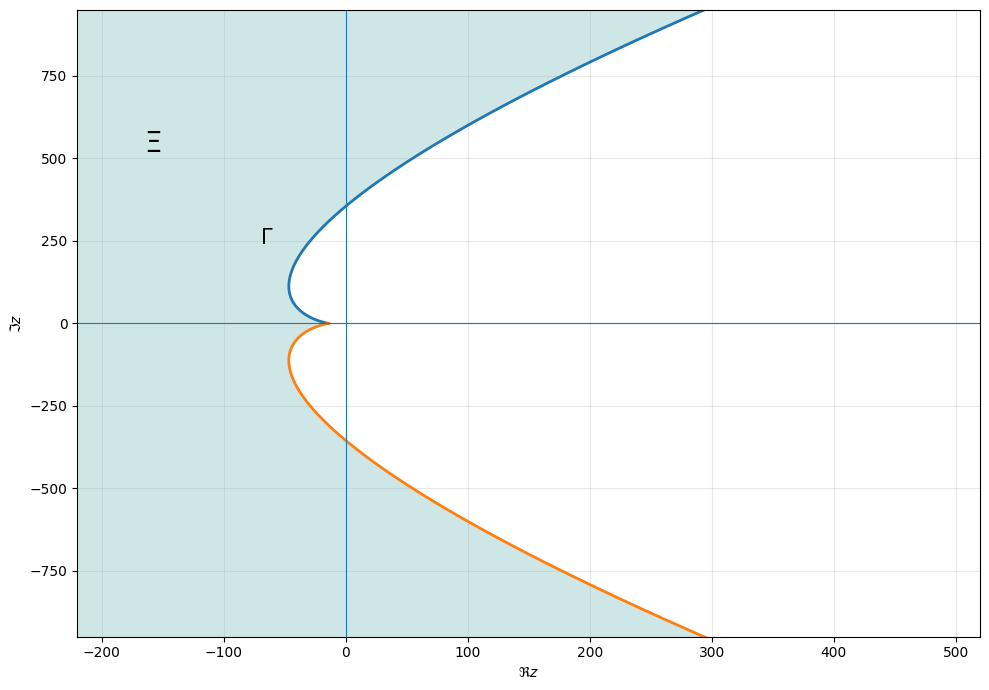}
    \caption{The region \(\Xi\) and its boundary \(\Gamma\) for \(m=5\).}
    \label{fig}
\end{figure}

\begin{theorem}\label{eqithm-regionuation}
Assume the standing assumptions. Then the following resolvent estimates hold.

First, for every sufficiently large \(\lambda\in\mathbb R\),
\begin{equation}\label{eqithm-boundary}
    \left\|
        \bigl(P-(\lambda\pm i)^m\bigr)^{-1}
    \right\|_{L^p(M)\to L^{p'}(M)}
    \lesssim
    \langle \lambda\rangle^{2\nu(p')+1-m},
\end{equation}
where $\nu(p')$ is as in \eqref{eq: nu}.

Moreover, this estimate propagates to the whole region \(\Xi\). More precisely,
for every \(z\in\Xi\),
\begin{equation}\label{eqithm-region}
    \left\|
        (P-z)^{-1}
    \right\|_{L^p(M)\to L^{p'}(M)}
    \lesssim
    |z|^{\frac{2\nu(p')+1}{m}-1},
\end{equation}
uniformly in \(z\).

Finally, for $z\in\C\backslash\{\Xi\cup[0,\infty)\}$, one has the singular resolvent estimate
\begin{equation}\label{secondequation}
    \left\|
        (P-z)^{-1}
    \right\|_{L^{p}(M)\to L^{p'}(M)}
    \lesssim
    d(z)^{-1}(1+|z|)^{\frac{2\nu(p')}{m}},
\end{equation}
where \(d(z):=\operatorname{dist}(z,\operatorname{spec}(P))\).

\end{theorem}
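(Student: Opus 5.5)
We write $A:=P^{1/m}$, with eigenvalues $\lambda_k$ and eigenprojections $E_k$. The basic input is Sogge's spectral cluster estimate for the first-order operator $A$; it applies because the cospheres $\Sigma_x$ have non-vanishing curvature. For unit windows $\chi_n:=\boldsymbol 1_{[n,n+1)}(A)$ it reads
\[
\|\chi_n\|_{L^2\to L^{p'}}\lesssim \langle n\rangle^{\nu(p')},
\qquad\text{hence}\qquad
\|\chi_n\|_{L^p\to L^{p'}}\lesssim \langle n\rangle^{2\nu(p')}
\]
by a $TT^*$ argument. Throughout, all bounds are obtained by decomposing $(P-z)^{-1}=\sum_n \chi_n(P-z)^{-1}$ and estimating each piece spectrally.

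\textbf{Boundary estimate \eqref{eqithm-boundary}.} Fix $z=(\lambda\pm i)^m$ with $\lambda$ large and write $\mu:=\lambda\pm i$. On the spectrum we factor
\[
\mu'^m-\mu^m=\prod_{j=0}^{m-1}\bigl(\mu'-\omega^j\mu\bigr),\qquad \omega=e^{2\pi i/m},
\]
at least for integer $m$; for general $m$ we instead compare $|\mu'^m-\mu^m|$ with $|\mu'-\mu|(|\mu'|+|\mu|)^{m-1}$ directly. On the region $|\mu'-\lambda|\le\lambda/2$ this gives
\[
|\mu'^m-z|\asymp \lambda^{m-1}\,|\mu'-\lambda\mp i|.
\]
We therefore split the spectrum of $A$ into the range $|\mu'-\lambda|\le \lambda/2$ and its complement.

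On the near range we sum the unit-window pieces:
\[
\sum_{|n-\lambda|\le\lambda/2}\lambda^{1-m}\langle n-\lambda\rangle^{-1}\langle n\rangle^{2\nu}.
\]
This sum carries a logarithm, which the plain triangle inequality cannot remove. Following Cuenin, we avoid it by writing the near part as $\lambda^{1-m}\,m(A)$, where $m(A)$ behaves like $(A-\mu)^{-1}$ localized to $|A-\lambda|\le\lambda/2$. We then use the $TT^*$/spectral cluster $L^2\to L^{p'}$ bound in its stronger form, namely
\[
\bigl\|(A-\mu)^{-1}\bigr\|_{L^p\to L^{p'}}\lesssim \lambda^{2\nu}
\qquad\text{for } \operatorname{Im}\mu=\pm1.
\]
This is the Bourgain--Shao--Sogge--Yao/Cuenin principle, which exploits the fact that the imaginary part equals $1$. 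We take it as the key lemma, proved via the Fourier representation $(A-\mu)^{-1}=i\int_0^\infty e^{-it(A-\mu)}\,dt$ combined with the cluster bounds on the $\hat\rho$-smoothed pieces. This step is where I expect the main obstacle: passing from pseudodifferential cluster estimates to the sharp resolvent bound for $A$ without the logarithm.

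On the far range, $|\mu'^m-z|\gtrsim \langle\mu'\rangle^m+\lambda^m$. Here the Sobolev embedding $H^{m/2}\supset$ (admissible $p'$) together with $L^p\to H^{-m/2}$ duality gives a bound of order $\lambda^{\frac{2d}{2}(\frac12-\frac1{p'})\cdot 2-m}$. This is $\le\lambda^{2\nu+1-m}$ precisely because $p'<2d/(d-m)$ and $m\ge 2d/(d+1)$.

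\textbf{Extension to $\Xi$, estimate \eqref{eqithm-region}.} The function
\[
F(z)=\bigl\langle (P-z)^{-1}f,g\bigr\rangle\, z^{1-\frac{2\nu+1}{m}}
\]
is analytic in $\Xi_0$, which contains no spectrum since $\Xi_0$ avoids $[0,\infty)$, and it is bounded on $\Gamma$ by the previous step. In the interior, $F$ has at most polynomial growth: far from the spectrum, the trivial estimate $\|(P-z)^{-1}\|_{H^{-m/2}\to H^{m/2}}\lesssim 1$ holds for $|z|$ large in $\Xi_0$ at angle bounded away from $0$. The region $\Xi_0$ is asymptotically a sector of opening $<2\pi$, so Phragmén--Lindelöf applies and gives $|F|\lesssim\|f\|_p\|g\|_p$ on $\Xi$.

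\textbf{Singular estimate \eqref{secondequation}.} For $z$ outside $\Xi\cup[0,\infty)$ we write $z=\mu^m$ with $|\operatorname{Im}\mu|<1$ and $\operatorname{Re}\mu=\lambda$. We then split
\[
(P-z)^{-1}=\boldsymbol 1_{|A-\lambda|\le1}(A)(P-z)^{-1}+\text{rest}.
\]
For the cluster part, the $L^2$ operator norm is at most $d(z)^{-1}$, and composing $\boldsymbol 1_{|A-\lambda|\le 1}(A)$ on both sides with the $L^2\to L^{p'}$ cluster bound yields $d(z)^{-1}\lambda^{2\nu}$. For the rest, $|\mu'^m-z|\gtrsim \lambda^{m-1}|\mu'-\mu|$ with $|\mu'-\mu|\ge1$, so it is controlled as in the boundary step. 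This gives $\lambda^{2\nu+1-m}\lesssim d(z)^{-1}\lambda^{2\nu}$, using $d(z)\lesssim\lambda^{m-1}$, which holds because the spectral gaps are $O(\lambda^{m-1})$ by Weyl's law. Rewriting $\lambda\asymp(1+|z|)^{1/m}$ gives \eqref{secondequation}.
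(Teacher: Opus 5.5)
\textbf{There is a genuine gap in the key lemma.} Several of your steps are sound and agree in substance with the paper:
\begin{itemize}
\item the splitting into $|\mu'-\lambda|\le\lambda/2$ and its complement;
\item pulling out the smooth factor $(\mu'-\mu)/(\mu'^m-\mu^m)\asymp\lambda^{1-m}$;
\item the Sobolev bound $\lambda^{2d(\frac12-\frac1{p'})-m}\lesssim\lambda^{2\nu+1-m}$ on the far range;
\item the unit-window bound $d(z)^{-1}\lambda^{2\nu}$ for the cluster part of \eqref{secondequation};
\item the Phragm\'en--Lindel\"of strategy.
\end{itemize}
The gap is the key lemma $\|\psi(A/\lambda)(A-\lambda\mp i)^{-1}\|_{L^p\to L^{p'}}\lesssim\lambda^{2\nu(p')}$. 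It is the entire analytic content of \eqref{eqithm-boundary}, and your arguments for \eqref{eqithm-region} and for the off-cluster part of \eqref{secondequation} also rest on it.

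The mechanism you sketch does not prove it. Write $(A-\mu)^{-1}=i\int_0^\infty e^{-t}e^{-it(A-\lambda)}\,dt$ and insert $1=\rho(t)+(1-\rho(t))$ with $\rho$ supported near $0$.
\begin{itemize}
\item The $(1-\rho)$-piece has multiplier $O(\langle\tau-\lambda\rangle^{-N})$, so it is indeed summable against unit-cluster bounds.
\item The short-time piece has multiplier decaying only like $\langle\tau-\lambda\rangle^{-1}$, because of the jump of $\mathbf{1}_{t\ge0}$ at $t=0$. Splitting it dyadically into $t\sim2^{-j}$ and bounding each part through windows of width $W=2^j$ gives $2^{-j}(W^{1/2}\lambda^{\nu})^2=\lambda^{2\nu}$ per scale. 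That is $\lambda^{2\nu}\log\lambda$ again.
\end{itemize}
For $p'<2(d+1)/(d-1)$ you could rescue this by interpolating the window bound. For $p'\ge2(d+1)/(d-1)$, however, the bound $W^{1/2}\lambda^{\nu}$ is sharp for $1\ll W\le\lambda$; test it on the kernel $\mathbf{1}_{[\lambda,\lambda+W]}(A)\delta_x$. This range lies in the admissible range as soon as $m>2d/(d+1)$. So bounding the dyadic pieces separately through spectral windows cannot remove the logarithm. One has to exploit the cancellation of $(\tau-\lambda\mp i)^{-1}$ across $\tau=\lambda$.

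Calling the lemma a ``BSSY/Cuenin principle'' does not close the gap. For a general classical pseudodifferential $A$ this estimate is exactly what has to be proved. The earlier differential-operator proofs handle the short-time piece with an explicit parametrix and oscillatory-integral estimates.

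\textbf{How the paper closes this step.} It uses Cuenin's Christ--Kiselev argument, run microlocally in a spatial variable rather than in spectral time. Near the characteristic set it factors $a_1-\lambda=e_\lambda\,(\tau-a_\lambda(t,y,\eta))$, so that $\mathcal L_\lambda=D_t-A_\lambda(t)$ is an evolution in the coordinate $t$. With $\gamma=\nu(p')$:
\begin{itemize}
\item Sogge's bound in quasimode form \eqref{eq:quasimode-A} gives homogeneous $L^2\to L^{p'}$ bounds for the propagator $S(t,s)$.
\item A $TT^*$ argument bounds the full Duhamel operator $L^p\to L^{p'}$ by $\lambda^{2\gamma}$.
\item The Christ--Kiselev lemma, using $p<p'$, passes to the retarded Duhamel operator without loss.
\item This yields the direct estimate \eqref{eq:global-direct} in the sum space $L^2+\lambda^{\gamma}L^p$.
\item Inserting $u=(P-(\lambda\pm i)^m)^{-1}f$ and the half-resolvent bound $\lambda^{\gamma+1-m}$ from $L^p$ to $L^2$ gives \eqref{eqithm-boundary}. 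That half-resolvent bound is log-free because it only needs a square sum.
\end{itemize}

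\textbf{Two smaller points.}
\begin{itemize}
\item Along $\Gamma$ one has $\arg z=\pm m\arctan(1/\lambda)\to0$. So $\Xi_0$ is not asymptotically a sector of opening $<2\pi$, and your $O(1)$ bound at angles away from $0$ does not cover it. Use instead $\operatorname{dist}(z,[0,\infty))\gtrsim1$ on $\Xi$. This gives growth $\lesssim1+|z|$, which is enough for Phragm\'en--Lindel\"of in a slit-plane-type domain.
\item In \eqref{secondequation}, the off-cluster part with $|\operatorname{Im}\mu|<1$ is not literally ``as in the boundary step''. You need to compare with the point $z_1\in\Gamma$ having the same $\operatorname{Re}(z^{1/m})$, via the resolvent identity and the half-resolvent bounds, as the paper does. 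You also need to treat bounded $\operatorname{Re}(z^{1/m})$ separately.
\end{itemize}
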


\begin{rem}[Examples covered by the operator assumptions]
The operator assumptions are satisfied by the following classes. In every
case, the resulting order \(m\) is assumed to lie in the range
\[
    \frac{2d}{d+1}\leq m\leq d.
\]

\begin{itemize}
\item \emph{Fractional powers.}
Let
\[
    L\in\Psi_{\mathrm{cl}}^\ell(M),
    \qquad \ell>0,
\]
be a strictly positive self-adjoint elliptic classical pseudodifferential
operator whose principal symbol is positive away from the zero section and
satisfies the cosphere curvature condition \eqref{eq: cosphere}. Then
\[
    P=L^{m/\ell}
\]
is a positive self-adjoint elliptic classical pseudodifferential operator of
order \(m\) satisfying the same cosphere curvature condition. This includes,
in particular,
\[
    P=(-\Delta_g+c)^{m/2},
    \qquad c\ge0.
\]
The construction of complex and fractional powers of elliptic operators is due
to Seeley \cite{Seeley1967ComplexPowers}.

\item \emph{Positive weights and self-adjoint lower-order perturbations.}
Let \(P_0\in\Psi_{\mathrm{cl}}^m(M)\) satisfy the operator assumptions, let
\[
    b\in C^\infty(M;\mathbb R),
    \qquad b>0,
\]
and let
\[
    B\in\Psi_{\mathrm{cl}}^{m-1}(M),
    \qquad B=B^*,
    \qquad
    S\in\Psi^{-\infty}(M),
    \qquad S=S^*.
\]
Then, for \(C\) sufficiently large,
\[
    P=M_{\sqrt b}P_0M_{\sqrt b}+B+S+C
\]
is a positive self-adjoint elliptic classical pseudodifferential operator of
order \(m\), where \(M_{\sqrt b}\) denotes the multiplication operator by
\(\sqrt b\), that is, \(M_{\sqrt b}u(x)=\sqrt{b(x)}\,u(x)\). Its principal symbol is
\[
    \sigma_m(P)(x,\xi)=b(x)\sigma_m(P_0)(x,\xi),
\]
so its principal cospheres are positive fiberwise dilations of those of
\(P_0\), and the curvature condition is preserved.

This class includes, whenever the corresponding order lies in the admissible
range,
\[
    p\bigl((-\Delta_g+c)^{1/2}\bigr)+C,
\]
where \(p\in\mathbb R[t]\) has positive leading coefficient, as well as the
Laplace-type and magnetic Schrödinger operators
\[
    -\Delta_g+W+C
\]
and
\[
    (-i\,d+\mathcal A)^*(-i\,d+\mathcal A)+W+C,
\]
where
\[
    \mathcal A\in\Omega^1(M;\mathbb R),
    \qquad
    W\in C^\infty(M;\mathbb R).
\]
It also includes the Yamabe operator \cite{Yamabe1960}, the Paneitz operator
\cite{Paneitz2008}, and, more generally, the GJMS operators
\cite{GrahamJenneMasonSparling1992GJMS}, introduced by Graham, Jenne, Mason, and Sparling, after adding a sufficiently large
constant and provided that their order \(2k\) satisfies \(2k\leq d\).

\end{itemize}

Thus the assumptions include fractional powers, polynomial spectral functions,
Laplace-type and magnetic Schrödinger operators, conformally covariant
geometric operators, and natural self-adjoint perturbations and combinations
of such operators. The relation between the cosphere curvature condition and
spectral-cluster estimates is discussed, for example, by Sogge \cite{CS},
Seeger--Sogge \cite{SeegerSogge1989Eigenfunctions}, and
Krupchyk--Uhlmann \cite{KU15}.
\end{rem}

\section{Notation and Preliminaries}\label{sec:preliminaries}

Throughout, \(d\geq2\), \(\langle x\rangle=(1+|x|^2)^{1/2}\), and

\[
    D(c,r):=\{z\in\mathbb C:|z-c|\leq r\}.
\]

The notation \(X\lesssim Y\) means that \(X\leq CY\), with a constant
independent of the parameters under consideration; \(X\asymp Y\) means that
both inequalities hold. All \(L^p(M)\)-spaces are formed with respect to the
Riemannian volume measure. We identify a measurable function with its
multiplication operator whenever no confusion can arise.

We use the standard symbol classes \(S^s\), the Kohn--Nirenberg and Weyl
quantizations, and the classes \(\Psi^s\) and \(\Psi_{\mathrm{cl}}^s\) of
pseudodifferential and classical pseudodifferential operators. Thus
\(D_x=-i\partial_x\), \(a^w=\operatorname{Op}^w(a)\), and the principal symbol
of \(B\in\Psi_{\mathrm{cl}}^s(M)\) is denoted by \(\sigma_s(B)\). If $B\in\Psi^{-\infty}=\bigcap_{m\in\mathbb R}\Psi^m$, we call $B$ a smoothing operator. We use without
further comment the composition and adjoint formulae, elliptic parametrices,
the \(L^r\)-boundedness of order-zero operators for \(1<r<\infty\), and the
parameter-dependent functional calculus. These facts, as well as the local
symbol conventions used below, may be found in Stein
\cite[Chapter VI]{Stein93} and Zworski \cite[Chapters 4 and 14]{zw22}. All
local pseudodifferential operators are taken properly supported. In
particular, inserting two cutoffs with separated supports produces a smoothing operator.

Set

\[
    A:=P^{1/m}.
\]

The elliptic functional calculus shows that \(A\in\Psi_{\mathrm{cl}}^1(M)\)
is positive and self-adjoint, with principal symbol $a_1(x,\xi)=p_m(x,\xi)^{1/m}.$
Since \(P\) has compact resolvent, there is an orthonormal basis
\((e_j)_{j\geq0}\) such that

\[
    Ae_j=\lambda_j e_j,
    \qquad
    Pe_j=\lambda_j^m e_j,
    \qquad
    0\leq\lambda_0\leq\lambda_1\leq\cdots .
\]

For a Borel set \(J\subset[0,\infty)\), write $\Pi_J:=\mathbf 1_J(A).$
Thus, for \(z\notin\operatorname{spec}(P)\),

\[
    (P-z)^{-1}f
    =
    \sum_{j=0}^{\infty}
    \frac{\langle f,e_j\rangle}{\lambda_j^m-z}e_j
\]

in \(L^2(M)\). The curvature assumption gives Sogge's unit spectral-cluster
estimate \cite{CS}

\begin{equation}\label{Sogge1}
    \|\Pi_{[\mu,\mu+1]}\|_{L^2(M)\to L^{p'}(M)}
    \lesssim
    \langle\mu\rangle^{\nu(p')},
    \qquad \mu\geq0,
\end{equation}

and, by duality,

\begin{equation}\label{Sogge2}
    \|\Pi_{[\mu,\mu+1]}\|_{L^p(M)\to L^2(M)}
    \lesssim
    \langle\mu\rangle^{\nu(p')}.
\end{equation}

\section{
Resolvent bounds. Proof of
\texorpdfstring{Theorem~\ref{eqithm-regionuation},
equation~\eqref{eqithm-boundary}}
{the boundary estimate}
}
\label{sec6}

\begin{proof}We prove the estimate on the unbounded part of the contour, where
\(\lambda\geq\lambda_0\) for a fixed sufficiently large \(\lambda_0\). The
remaining compact part follows from the continuity of the resolvent on compact
subsets of the resolvent set. Write

\[
    \gamma:=\nu(p')
\]

throughout this section. We first isolate the Christ--Kiselev step, then apply it to
the first-order operator \(A-\lambda\), and afterwards rewrite the result
in terms of \(P-\lambda^m\).

\subsection{The Christ--Kiselev upgrade}

We first fix the phase-space notation used in the local argument. Write
\[
    x=(t,y)\in I\times\mathbb R^{d-1},
    \qquad
    \xi=(\tau,\eta)\in\mathbb R\times\mathbb R^{d-1},
\]
where \(I\subset\mathbb R\) is a bounded interval. After a linear change of
coordinates, a sufficiently small conic neighborhood of a fixed point of the
characteristic set, localized to frequencies \(|\xi|\asymp\lambda\), may be
placed in a region of the form
\[
    \mathcal B_\lambda
    :=
    \left\{
        (t,y;\tau,\eta):
        (t,y)\in Q,\quad
        c_0\lambda<\tau<C_0\lambda,\quad
        |\eta|<\varepsilon\tau
    \right\},
\]
where
\[
    Q\Subset I\times\mathbb R^{d-1},
    \qquad
    0<c_0<C_0<\infty,
\]
and \(\varepsilon>0\) are independent of \(\lambda\). This is the analogue of
the phase-space region \(B_\lambda\) in
\cite[Section~4]{CUENIN2024110214}.
Once the characteristic equation \(a_1(t,y,\tau,\eta)=\lambda\) has been solved in the form $\tau=a_\lambda(t,y,\eta)$,
we define the associated transverse phase-space patch by
\[
    B_\lambda
    :=
    \left\{
        (t,y,\eta):
        \bigl(t,y;a_\lambda(t,y,\eta),\eta\bigr)
        \in\mathcal B_\lambda
    \right\}
    \subset I\times T^*\mathbb R^{d-1}.
\]
For a transverse symbol \(\theta_\lambda\), set
\[
    \widehat\theta_\lambda(t,y,\zeta)
    :=
    \theta_\lambda(t,y,\lambda\zeta),
    \qquad
    \widehat B_\lambda
    :=
    \left\{
        (t,y,\zeta):
        (t,y,\lambda\zeta)\in B_\lambda
    \right\}.
\]
We say that \(\theta_\lambda\) is uniformly adapted to \(B_\lambda\) if
\(\operatorname{supp}\widehat\theta_\lambda\) is contained in a fixed
enlargement of \(\widehat B_\lambda\) and
\(\widehat\theta_\lambda\) is bounded in \(C^\infty\), uniformly in
\(\lambda\). Equivalently, in the original variables,
\begin{equation}\label{eq:transverse-cutoff-symbol-bounds}
    \left|
        \partial_t^j
        \partial_y^\alpha
        \partial_\eta^\beta
        \theta_\lambda(t,y,\eta)
    \right|
    \leq
    C_{j,\alpha,\beta}\lambda^{-|\beta|}
\end{equation}
for every \(j\geq0\) and all multi-indices \(\alpha,\beta\), in addition
to the stated support condition. For each fixed \(t\), we write
\[
    \theta_{\lambda,t}^w
    :=
    \operatorname{Op}_y^w
    \bigl(\theta_\lambda(t,y,\eta)\bigr).
\]

For two uniformly adapted families, we write
\[
    \chi_\lambda\prec\widetilde\chi_\lambda
\]
if there exists \(c>0\), independent of \(\lambda\), such that
\[
    \operatorname{dist}_{t,y,\zeta}\!\left(
        \operatorname{supp}\widehat\chi_\lambda,
        \operatorname{supp}
        \bigl(1-\widehat{\widetilde\chi}_\lambda\bigr)
    \right)
    \geq c.
\]
Thus \(\widetilde\chi_\lambda=1\) on a uniform neighborhood of
\(\operatorname{supp}\chi_\lambda\) in the rescaled phase-space variables.
%
Finally, for a measure space \(X\), define
\begin{equation}\label{eq:Ylambda}
    \mathcal Y_\lambda(X)
    :=
    L^2(X)+\lambda^\gamma L^p(X),
    \qquad
    \|h\|_{\mathcal Y_\lambda(X)}
    :=
    \inf_{h=h_2+h_p}
    \left(
        \|h_2\|_{L^2(X)}
        +
        \lambda^\gamma\|h_p\|_{L^p(X)}
    \right).
\end{equation}
This notation allows the \(L^2\)-remainder and the \(L^p\)-forcing term to
be estimated simultaneously.

\begin{proposition}[Localized Christ--Kiselev upgrade]\label{prop:CK-upgrade}
Let \(1<p\leq2\leq p'<\infty\), with \(p\) and \(p'\) conjugate, let
\(\gamma\geq0\), and let \(\lambda\geq1\). Let
\(I=(a,b)\subset\mathbb R\) be an interval of length at most one, set
\(D_t=-i\partial_t\), and let

\[
    \mathcal L_\lambda:=D_t-A_\lambda(t),
    \qquad
    A_\lambda(t)
    :=
    \operatorname{Op}_y^w\bigl(a_\lambda(t,y,\eta)\bigr),
\]

where \(a_\lambda\) is real-valued, \(A_\lambda(t)\) has a unitary
two-parameter propagator \(S(t,s)\) on
\(L^2(\mathbb R^{d-1})\), and
\begin{equation}\label{eq:A-lambda-L2-bound}
    \sup_{t\in I}
    \|A_\lambda(t)\|_{L^2_y\to L^2_y}
    \lesssim \lambda.
\end{equation}
Fix two real-valued families of
uniformly \(B_\lambda\)-adapted symbols
\[
    \chi_\lambda\prec\widetilde\chi_\lambda.
\]
Assume that there is a \(\delta>0\), independent of \(\lambda\), such that
the time projection of \(\operatorname{supp}\chi_\lambda\) is contained in
\([a+\delta,b-\delta]\).
Suppose that the homogeneous estimate

\begin{equation}\label{eq:homogeneous-first-order}
    \|\theta_{\lambda,t}^w \tilde{v}\|_{L^{p'}_{t,y}(I\times\mathbb R^{d-1})}
    \lesssim
    \lambda^\gamma
    \bigl(
        \|\tilde{v}\|_{L^2_{t,y}(I\times\mathbb R^{d-1})}
        +\|\mathcal L_\lambda \tilde{v}\|_{L^2_{t,y}(I\times\mathbb R^{d-1})}
    \bigr).
\end{equation}

holds, for every \(\tilde{v}\) in the graph domain of \(\mathcal L_\lambda\) in
\(L^2(I\times\mathbb R^{d-1})\), for the two fixed choices
$\theta_\lambda=\chi_\lambda$ and $\theta_\lambda=\widetilde\chi_\lambda$. Assume in addition that
\begin{equation}\label{eq:cutoff-commutator-L2}
    \sup_{t\in I}
    \bigl\|
        [\mathcal L_\lambda,
        \widetilde\chi_{\lambda,t}^w]
    \bigr\|_{L^2_y\to L^2_y}
    \lesssim 1.
\end{equation}
Then, every \(v\in L^2(I\times\mathbb R^{d-1})\) such that
\(\mathcal L_\lambda v\in
\mathcal Y_\lambda(I\times\mathbb R^{d-1})\), satisfies
\begin{equation}\label{eq:CK-direct}
    \|\chi_{\lambda,t}^w v\|_{L^{p'}_{t,y}(I\times\mathbb R^{d-1})}
    \lesssim
    \lambda^\gamma
    \left(
        \|v\|_{L^2_{t,y}(I\times\mathbb R^{d-1})}
        +\|\mathcal L_\lambda v\|
            _{\mathcal Y_\lambda(I\times\mathbb R^{d-1})}
    \right)
    .
\end{equation}
\end{proposition}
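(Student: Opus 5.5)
Write $\mathcal L_\lambda v=F=F_2+F_p$ with $\|F_2\|_{L^2}+\lambda^\gamma\|F_p\|_{L^p}\le 2\|F\|_{\mathcal Y_\lambda}$. By linearity it suffices to treat separately the part of $v$ driven by $F_2$ and the part driven by $F_p$. The $L^2$-forcing part is immediate from \eqref{eq:homogeneous-first-order} with $\theta_\lambda=\chi_\lambda$. The real work is the $L^p$-forcing part, where we lose nothing only if we use Duhamel's formula together with the Christ--Kiselev lemma (valid since $p<p'$, i.e.\ $p\le 2\le p'$ with $p\ne p'$; the case $p=p'=2$ is trivial).

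\emph{Step 1 (localization).} Replace $v$ by $w:=\widetilde\chi^w_{\lambda,t}v$ wherever possible. Since $\chi_\lambda\prec\widetilde\chi_\lambda$, the symbol calculus in the rescaled variables gives $\chi^w_{\lambda,t}(1-\widetilde\chi^w_{\lambda,t})=O(\lambda^{-N})$ on $L^2_y$, uniformly in $t$. Hence
\[
\chi^w_{\lambda,t}v=\chi^w_{\lambda,t}w+O(\lambda^{-N})v .
\]
I will absorb this error using Sobolev embedding together with \eqref{eq:A-lambda-L2-bound}; if needed, I will apply \eqref{eq:homogeneous-first-order} to a slightly larger cutoff, where the error is controlled by $\|v\|_{L^2}$. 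We have
\[
\mathcal L_\lambda w=\widetilde\chi^w_{\lambda,t}F+[\mathcal L_\lambda,\widetilde\chi^w_{\lambda,t}]v ,
\]
and by \eqref{eq:cutoff-commutator-L2} the commutator term is an $L^2$ forcing of size $\lesssim\|v\|_{L^2}$. Thus, after this step, the only problematic forcing is $G:=\widetilde\chi^w_{\lambda,t}F_p$ (an $L^p$ function up to $L^p$-bounded order-zero cutoffs).

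\emph{Step 2 (Duhamel and time cutoff).} Take $\rho\in C_c^\infty(I)$ with $\rho=1$ on $[a+\delta,b-\delta]$. Then $\chi^w_{\lambda,t}=\chi^w_{\lambda,t}\rho(t)$, and $u:=\rho w$ vanishes near $t=a$. Duhamel gives
\[
u(t)=i\int_a^t S(t,s)H(s)\,ds,\qquad H=\rho\,\mathcal L_\lambda w-i\rho' w .
\]
The $L^2$ pieces of $H$ are handled by Minkowski's inequality in $s$ and \eqref{eq:homogeneous-first-order} applied to $S(\cdot,s)H(s)$ truncated to $t>s$. The $L^p$ piece $G$ is treated via the untruncated operator
\[
Tg(t):=\chi^w_{\lambda,t}\int_I S(t,s)\,\widetilde\chi^w_{\lambda,s}g(s)\,ds .
\]
This is a $TT^*$-type object. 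Write $T=\mathcal U\mathcal U'^*$, where
\[
\mathcal U f(t)=\chi^w_{\lambda,t}S(t,0)f,\qquad \mathcal U' f(t)=\widetilde\chi^w_{\lambda,t}S(t,0)f .
\]
The function $\mathcal U f$ solves $\mathcal L_\lambda(S(\cdot,0)f)=0$ with $L^2_{t,y}$ norm $\le|I|^{1/2}\|f\|$. So \eqref{eq:homogeneous-first-order} with both choices of $\theta_\lambda$ gives $\|\mathcal U\|_{L^2\to L^{p'}}\lesssim\lambda^\gamma$, and by duality $\|\mathcal U'^*\|_{L^p\to L^2}\lesssim\lambda^\gamma$. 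Hence $\|T\|_{L^p\to L^{p'}}\lesssim\lambda^{2\gamma}$. The Christ--Kiselev lemma then transfers this bound to the retarded operator with kernel restricted to $s<t$, which is what we need, yielding
\[
\|\chi^w v\|_{L^{p'}}\lesssim \lambda^{2\gamma}\|F_p\|_{L^p}\le\lambda^\gamma\cdot\lambda^\gamma\|F_p\|_{L^p} .
\]

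\emph{Main obstacle.} The delicate point is that $\widetilde\chi^w_{\lambda,s}$ does not commute with the propagator: the $L^p$ forcing must be localized by $\widetilde\chi$ before Duhamel is applied, while the outer cutoff is $\chi$. This mismatch is exactly why \eqref{eq:homogeneous-first-order} is assumed for both cutoffs. I will also need to check that the error $(1-\widetilde\chi^w)F_p$ does not reach $\chi^w v$. For this I plan to use the Egorov-type support propagation implicit in $\chi\prec\widetilde\chi$ over the short time interval; if that fails, I will instead keep $F_p$ unlocalized in $\mathcal U'^*$, replacing $\widetilde\chi$ there by the identity. That alternative requires only the $L^p\to L^2$ bound for the full $\mathcal U'^*$ restricted to the microlocal region, and I would establish this bound via the same duality argument.
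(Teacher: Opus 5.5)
Your proposal is correct and follows the paper's proof essentially step for step: the homogeneous estimate on free evolutions $S(\cdot,s)h$, the mismatched $TT^*$ factorization $\chi^w_{\lambda,t}S(t,s)\widetilde\chi^w_{\lambda,s}=\mathcal U(\mathcal U')^*$ giving the $\lambda^{2\gamma}$ bound for the full Duhamel operator, Christ--Kiselev for $p<p'$, Minkowski for the $L^2$ forcings (including the commutator and time-cutoff terms), and a separated-support argument for $\chi^w_{\lambda,t}(1-\widetilde\chi^w_{\lambda,t})v$, using Sobolev embedding in $y$ and $W^{1,p}_t$ control from $D_tv=A_\lambda v+F$. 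The worry in your last paragraph is moot: because you apply $\widetilde\chi^w_{\lambda,t}$ to $v$ before computing $\mathcal L_\lambda w$, no term $(1-\widetilde\chi^w_{\lambda,t})F_p$ ever arises and no Egorov-type propagation is needed; you should also drop the fallback with an unlocalized $(\mathcal U')^*$, since the hypotheses give no $L^p\to L^2$ bound without the cutoff.
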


\begin{proof}
\textbf{Step 1: homogeneous and inhomogeneous estimates.}
Let \(S(t,s)\) be the unitary propagator of \(A_\lambda(t)\). 
For
\(\theta_\lambda\in\{\chi_\lambda,\widetilde\chi_\lambda\}\), apply
\eqref{eq:homogeneous-first-order} to \(z(t)=S(t,s)h\). Since
\(\mathcal L_\lambda z=0\) and \(|I|\leq1\), this gives, uniformly in
\(s\in I\),
\begin{equation}\label{eq:homogeneous-flow}
    \|\theta_{\lambda,t}^wS(t,s)h\|
        _{L^{p'}_{t,y}(I\times\mathbb R^{d-1})}
    \lesssim
    \lambda^\gamma\|h\|_{L^2_y(\mathbb R^{d-1})}.
\end{equation}
Then, a \(TT^*\)-argument and equation \eqref{eq:homogeneous-flow} yields

\begin{equation}\label{eq:full-Duhamel}
    \left\|
        \int_I
        \chi_{\lambda,t}^wS(t,s)
        \widetilde\chi_{\lambda,s}^w f(s)\,ds
    \right\|_{L^{p'}_{t,y}(I\times\mathbb R^{d-1})}
    \lesssim
    \lambda^{2\gamma}
    \|f\|_{L^p_{t,y}(I\times\mathbb R^{d-1})}.
\end{equation}

If \(p'>2\), then \(p<p'\), and the Christ--Kiselev Lemma, see e.g. 
\cite{CHRIST2001409,Tao01011999} converts the full Duhamel operator into the
retarded Duhamel operator without changing the magnitude of the estimate:

\begin{equation}\label{eq:retarded-Duhamel}
    \left\|
        \int_{I\cap\{s<t\}}
        \chi_{\lambda,t}^wS(t,s)
        \widetilde\chi_{\lambda,s}^w f(s)\,ds
    \right\|_{L^{p'}_{t,y}(I\times\mathbb R^{d-1})}
    \lesssim
    \lambda^{2\gamma}
    \|f\|_{L^p_{t,y}(I\times\mathbb R^{d-1})}.
\end{equation}

\textbf{Step 2: Christ-Kiselev type estimate.}
For \(p'=2\), one has \(p=2\). In this case the retarded estimate follows
directly from the uniform \(L^2\)-boundedness of the two cutoffs, unitarity,
Cauchy--Schwarz in \(s\), and \(|I|\leq1\). This bound is stronger than the
one displayed in \eqref{eq:retarded-Duhamel}, since \(\lambda\geq1\) and
\(\gamma\geq0\), and no endpoint form of the Christ--Kiselev Lemma is needed.
Moreover, Minkowski's
inequality and \eqref{eq:homogeneous-flow} give

\begin{equation}\label{eq:L2-Duhamel}
    \left\|
        \int_{I\cap\{s<t\}}
        \chi_{\lambda,t}^wS(t,s)g(s)\,ds
    \right\|_{L^{p'}_{t,y}(I\times\mathbb R^{d-1})}
    \lesssim
    \lambda^\gamma
    \|g\|_{L^2_{t,y}(I\times\mathbb R^{d-1})}.
\end{equation}

\textbf{Step 3: Frequency scale splitting:}
Choose \(t_0\in I\) before the time projection of
\(\operatorname{supp}\chi_\lambda\), and choose a real-valued cutoff
\(\eta\in C_c^\infty(I)\) such that $\eta(t_0)=0,$ $\eta=1$ on the time projection of $\operatorname{supp}\chi_\lambda$.
By the time-support assumption in the proposition, \(t_0\) and \(\eta\) can
be chosen with
\(\|\eta'\|_\infty\lesssim1\), uniformly in \(\lambda\). Since
\(\chi_{\lambda,t}^w u=\chi_{\lambda,t}^w(\eta u)\), we apply Duhamel's
formula to \(\eta u\), whose value at \(t_0\) is zero. 
We split the following:
we have:

\begin{align}\label{eq: charactersitic splitting}
\begin{split}
    \|\chi_{\lambda,t}^w u\|_{L^{p'}_{t,y}(I\times\mathbb R^{d-1})}
&\le\|\chi_{\lambda,t}^w\widetilde{\chi}_{\lambda,t}^w u\|_{L^{p'}_{t,y}(I\times\mathbb R^{d-1})}+\|\chi_{\lambda,t}^w(I-\widetilde{\chi}_{\lambda,t}^w) u\|_{L^{p'}_{t,y}(I\times\mathbb R^{d-1})}\\
&=:I+II
\end{split}    
\end{align}

\textbf{Step 3.1} We estimate $I$ which in Step 3.2 will turn out to be only contributing term.

Let
\[
    \mathcal L_\lambda u=h_2+h_p,
    \qquad
    h_2\in L^2_{t,y},
    \qquad
    h_p\in L^p_{t,y},
\]
be any admissible decomposition.

Since
\[
    \mathcal L_\lambda\left(\widetilde\chi_{\lambda,t}^w(\eta u)\right)
    =
    \eta\widetilde\chi_{\lambda,t}^wh_p
    +\eta\widetilde\chi_{\lambda,t}^wh_2
    +\eta[\mathcal L_\lambda,\widetilde\chi_{\lambda,t}^w]u+\chi_{\lambda,t}^w[D_t,\eta]u,\quad [D_t,\eta]=-i\eta'
\]
the uniform \(L^2\)-boundedness of
\(\widetilde\chi_{\lambda,t}^w\),
Duhamel's formula, \eqref{eq:cutoff-commutator-L2}, \eqref{eq:retarded-Duhamel}, and
\eqref{eq:L2-Duhamel} now give the following estimate.

\begin{align}
\begin{split}
I&\le
\left\|
    \int_{t_0}^{t}
    \chi_{\lambda,t}^wS(t,s)\widetilde\chi_{\lambda,s}^w
    \bigl(\eta h_p\bigr)(s)\,ds
\right\|_{L^{p'}_{t,y}(I\times\mathbb R^{d-1})}
\\
&\quad+
\left\|
    \int_{t_0}^{t}
    \chi_{\lambda,t}^wS(t,s)\eta(s)\widetilde\chi_{\lambda,s}^wh_2(s)\,ds
\right\|_{L^{p'}_{t,y}(I\times\mathbb R^{d-1})}
\\
&\quad+
\left\|
    \int_{t_0}^{t}
    \chi_{\lambda,t}^wS(t,s)\eta(s)
    [\mathcal L_\lambda,\widetilde\chi_{\lambda,t}^w]u
    (s)\,ds
\right\|_{L^{p'}_{t,y}(I\times\mathbb R^{d-1})}
\\
&\quad+
\left\|
    \int_{t_0}^{t}
    \chi_{\lambda,t}^wS(t,s)[D_t,\eta]u\,ds
\right\|_{L^{p'}_{t,y}(I\times\mathbb R^{d-1})}
\\
&\lesssim
\lambda^\gamma\|h_2\|_{L^2_{t,y}(I\times\mathbb R^{d-1})}
+
\lambda^{2\gamma}\|h_p\|_{L^p_{t,y}(I\times\mathbb R^{d-1})}
+
\lambda^\gamma\|u\|_{L^2_{t,y}(I\times\mathbb R^{d-1})}.
\end{split}
\end{align}

\textbf{Step 3.2:} We show that the remainder \(II\) is \(O(\lambda^{-\infty})\), which
will conclude the proof. Put
\begin{equation}\label{eq: smoothingpartsthat can be ignored}
  R_{\lambda,t}
    :=
    \chi_{\lambda,t}^w
    (I-\widetilde\chi_{\lambda,t}^w).  
\end{equation}
By the separated-support symbolic calculus and the strengthened uniform
nesting, for every \(K,N\geq0\),
\begin{equation}\label{eq:zworski-smoothing}
    \sup_{t\in I}
    \left(
        \|R_{\lambda,t}\|_{L^2_y\to H^K_y}
        +\|R_{\lambda,t}\|_{L^p_y\to H^K_y}
        +\|\partial_tR_{\lambda,t}\|_{L^2_y\to H^K_y}
        +\|\partial_tR_{\lambda,t}\|_{L^p_y\to H^K_y}
    \right)
    \leq C_{K,N}\lambda^{-N}.
\end{equation}
Here one may use the separated-support estimate
\cite[Theorem~4.25]{zw22} together with the residual calculus
\cite[\S8.4, especially (8.4.18)]{zw22}; the same argument applies after
one \(t\)-derivative because the symbol families and their nesting are
uniform in \(t\).

Choose \(K\) sufficiently large that
\begin{equation}\label{eq:Lp-Hk embedding}
    H_y^K(\mathbb R^{d-1})
    \hookrightarrow
    L_y^{p'}(\mathbb R^{d-1}).
\end{equation}
Then
\begin{align*}
II&=\|R_{\lambda,t}u\|_{L^{p'}(I\times Y)}
\\
&\lesssim \sup_{t\in I}\|R_{\lambda,t}u(t)\|_{L_y^{p'}}
\\
&\lesssim
\sup_{t\in I}
\|R_{\lambda,t}u(t)\|_{H_y^K}
\\
&\lesssim
\|R_{\lambda,\cdot}u\|_{W^{1,p}(I;H_y^K)}
\\
&\lesssim_{K,N,p'}\lambda^{-N}
\left(
\|u\|_{L^2}
+
\|h_2\|_{L^2}
+
\|h_p\|_{L^p}
\right).
\end{align*}
Line three follows from from the embedding \eqref{eq:Lp-Hk embedding}, line four is the one-dimensional embedding
$W^{1,p}(I;H^K_y)\hookrightarrow L^\infty(I;H^K_y)$ for $p>1$, and line five follows from $D_tv=A_\lambda(t)v+h_2+h_p$, the assumption \eqref{eq:A-lambda-L2-bound}, and \eqref{eq:zworski-smoothing}.  
\end{proof}

\subsection{What the spectral-cluster estimate supplies}

Set
\[
    A:=P^{1/m},
\]
where the positive \(m\)-th root is defined by the spectral calculus. Thus
\(A\) is a positive self-adjoint elliptic pseudodifferential operator of order
one.

We record only the two consequences of \eqref{Sogge1} that are needed below.

\begin{lemma}\label{lem:cluster-consequences}
Let \(\lambda\geq2\). First, if
\(v=\Pi_{[0,2\lambda]}v\), then

\begin{equation}\label{eq:quasimode-A}
    \|v\|_{L^{p'}(M)}
    \lesssim
    \lambda^\gamma
    \bigl(
        \|v\|_{L^2(M)}
        +\|(A-\lambda)v\|_{L^2(M)}
    \bigr).
\end{equation}

Second, with

\[
    R_\lambda^\pm
    :=\bigl(P-(\lambda\pm i)^m\bigr)^{-1},
\]

one has the two half-resolvent bounds

\begin{equation}\label{eq:half-resolvent}
    \|R_\lambda^\pm\|_{L^2(M)\to L^{p'}(M)}
    +
    \|R_\lambda^\pm\|_{L^p(M)\to L^2(M)}
    \lesssim
    \lambda^{\gamma+1-m}.
\end{equation}
\end{lemma}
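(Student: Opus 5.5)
Both parts of Lemma~\ref{lem:cluster-consequences} will come from the eigenfunction expansion and the unit-cluster bound \eqref{Sogge1}. We decompose the spectrum of \(A\) into unit intervals \(J_k:=[k,k+1)\) and write \(\Pi_k:=\Pi_{J_k}\). The same orthogonal-sum argument handles both statements.

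\emph{Quasimode bound \eqref{eq:quasimode-A}.} Let \(v=\Pi_{[0,2\lambda]}v\). We write \(v=\sum_{k\le 2\lambda}\Pi_k v\) and apply the triangle inequality in \(L^{p'}\) together with \eqref{Sogge1}:
\[
\|v\|_{L^{p'}}\lesssim\sum_{k\le2\lambda}\langle k\rangle^{\gamma}\|\Pi_kv\|_{L^2}.
\]
On \(J_k\) we have \(|\lambda_j-\lambda|\ge |k-\lambda|-1\), so
\[
\|\Pi_kv\|_{L^2}\le \min\{1,\langle k-\lambda\rangle^{-1}\}\bigl(\|v\|_{L^2}+\|(A-\lambda)v\|_{L^2}\bigr)
\]
up to constants. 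We then apply Cauchy--Schwarz in \(k\):
\[
\sum_{k\le 2\lambda}\langle k\rangle^\gamma\langle k-\lambda\rangle^{-1}a_k\le\Bigl(\sum_{k\le2\lambda}\langle k\rangle^{2\gamma}\langle k-\lambda\rangle^{-2}\Bigr)^{1/2}\Bigl(\sum_k\langle k-\lambda\rangle^{2}\langle k-\lambda\rangle^{-2}\|\Pi_k v\|^2\Bigr)^{1/2}.
\]
More precisely, we set \(a_k=\langle k-\lambda\rangle\|\Pi_kv\|_{L^2}\), so that \(\sum_k a_k^2\lesssim\|v\|_2^2+\|(A-\lambda)v\|_2^2\) by orthogonality. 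The weight sum \(\sum_{k\le2\lambda}\langle k\rangle^{2\gamma}\langle k-\lambda\rangle^{-2}\) is \(\lesssim\lambda^{2\gamma}\): the terms with \(k\) near \(\lambda\) contribute \(\lambda^{2\gamma}\sum\langle k-\lambda\rangle^{-2}\), and the terms with \(k\le\lambda/2\) contribute \(\lambda^{-2}\sum_{k\le\lambda}k^{2\gamma}\lesssim\lambda^{2\gamma-1}\). This yields \eqref{eq:quasimode-A}.

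\emph{Half-resolvent bounds \eqref{eq:half-resolvent}.} We treat \(R_\lambda^\pm:L^2\to L^{p'}\); the \(L^p\to L^2\) bound then follows by duality, since \((R^\pm_\lambda)^*=R^\mp_\lambda\). We write
\[
R_\lambda^\pm f=\sum_k\Pi_kR^\pm_\lambda f
\]
and use \eqref{Sogge1} on each piece, which gives
\[
\|R^\pm_\lambda f\|_{p'}\lesssim\sum_k\langle k\rangle^\gamma\sup_{\mu\in J_k}|\mu^m-(\lambda\pm i)^m|^{-1}\,\|\Pi_kf\|_2.
\]
The elementary algebraic step, which we expect to be the main point, is a lower bound on \(|\mu^m-(\lambda\pm i)^m|\) for \(\mu\ge0\) and large \(\lambda\). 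We factor \(\mu^m-w^m=\prod_{l}(\mu-w\omega^l)\) over the \(m\)-th roots of unity \(\omega^l\) (for non-integer \(m\), we argue directly through the argument of \(w^m\)). This gives
\[
|\mu^m-(\lambda\pm i)^m|\gtrsim (\mu+\lambda)^{m-1}\langle\mu-\lambda\rangle.
\]
Near \(\mu\approx\lambda\), this follows from the mean value theorem: the derivative of \(w\mapsto w^m\) is \(\approx\lambda^{m-1}\), and the imaginary part of the offset is \(1\). Away from \(\mu\approx\lambda\), it follows because \(\mu^m\) and \((\lambda\pm i)^m\) differ in modulus by \(\gtrsim|\mu^m-\lambda^m|\). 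Here we need the imaginary parts to be non-cancelling, which is where \(\lambda\ge\cot(\pi/m)\), i.e. \(\arg(\lambda\pm i)^m\) staying within \((-\pi,\pi)\), enters; for large \(\lambda\) it is automatic.

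With this lower bound, Cauchy--Schwarz in \(k\) leaves the weight sum
\[
\sum_k\langle k\rangle^{2\gamma}(k+\lambda)^{-2(m-1)}\langle k-\lambda\rangle^{-2}.
\]
For \(k\lesssim\lambda\) this sum is \(\lesssim\lambda^{2\gamma-2(m-1)}\). For \(k\gg\lambda\) the terms behave like \(k^{2\gamma-2m}\), and the tail \(\sum_{k\ge\lambda}k^{2\gamma-2m}\) is \(\lesssim\lambda^{2\gamma-2m+1}\) provided \(2\gamma<2m-1\). We would check this exponent condition from the admissible range \eqref{eq: admissible}: since \(\gamma=\nu(p')<m/2\) there and \(m\ge 2d/(d+1)>1\), we get \(2\gamma<m<2m-1\). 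The resulting bound is \(\lambda^{\gamma+1-m}\), which is \eqref{eq:half-resolvent}.
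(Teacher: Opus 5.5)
Your proposal is correct and follows the paper's proof essentially step for step. You use the unit-window decomposition with Sogge's bound \eqref{Sogge1}, then the triangle inequality and Cauchy--Schwarz with weights $1+|k-\lambda|$ for the quasimode estimate. For the half-resolvent bound you use the same lower bound on $|\mu^m-(\lambda\pm i)^m|$, namely $\gtrsim\lambda^{m-1}(1+|\mu-\lambda|)$ for $\mu\asymp\lambda$ and $\gtrsim\lambda^m+\mu^m$ elsewhere, followed by a summable tail and duality via $(R_\lambda^\pm)^*=R_\lambda^\mp$. The differences are minor: you justify the denominator bound, which the paper only asserts, and your tail condition $2\gamma<2m-1$ is the one actually needed, whereas the paper invokes the stronger (also valid) $\gamma<(m-1)/2$.
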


\begin{proof}
By \eqref{Sogge1}, the triangle inequality,
Cauchy--Schwarz with the weights \(1+|k-\lambda|\), and the spectral theorem, we have

\begin{align*}
\begin{split}
    \|v\|_{L^{p'}(M)}
    &\leq
    \sum_{0\leq k\leq 2\lambda}
        \|\Pi_{k}v\|_{L^{p'}(M)}
    \\
    &\lesssim
    \lambda^\gamma
    \sum_{0\leq k\leq 2\lambda}
        \|\Pi_{k}v\|_{L^2(M)}
    \\
    &\lesssim
    \lambda^\gamma
    \left(
        \sum_{k\geq0}
        (1+|k-\lambda|)^2
        \|\Pi_{k}v\|_{L^2(M)}^2
    \right)^{1/2}
    \\
    &\lesssim
    \lambda^\gamma\left(\bigl\|(1+|A-\lambda|)v\bigr\|_{L^2(M)}^2\right)^{1/2}
    \\
    &\lesssim
    \lambda^\gamma
    \left(
        \|v\|_{L^2(M)}
        +
        \|(A-\lambda)v\|_{L^2(M)}
    \right),      
\end{split}
\end{align*}

For the first estimate in \eqref{eq:half-resolvent}, the same decomposition
gives

\[
    \|R_\lambda^\pm f\|_{L^{p'}(M)}
    \lesssim
    \left(
        \sum_{k\geq0}
        \langle k\rangle^{2\gamma}
        \sup_{\tau\in I_k}
        \frac{1}{|\tau^m-(\lambda\pm i)^m|^2}
    \right)^{1/2}
    \|f\|_{L^2(M)}.
\]

For \(\tau\in[\lambda/2,2\lambda]\),

\[
    |\tau^m-(\lambda\pm i)^m|
    \gtrsim
    \lambda^{m-1}(1+|\tau-\lambda|),
\]

whereas outside this interval the denominator is bounded below by a constant
multiple of \(\lambda^m+\tau^m\). Consequently, the displayed square sum is
\(O(\lambda^{2\gamma+2-2m})\). The high-frequency tail is summable because
the admissible range implies

\[
    \gamma<\frac{m-1}{2}.
\]

This proves the \(L^2(M)\to L^{p'}(M)\) estimate. The
\(L^p(M)\to L^2(M)\) estimate follows
by duality, since \((R_\lambda^\pm)^*=R_\lambda^\mp\). This is also the
unit-window case of the abstract equivalences in Cuenin
\cite[Section 3]{CUENIN2024110214}.
\end{proof}

\subsection{The characteristic part}
We divide the proof of the characteristic estimate into four steps:
spectral and coordinate localization, reduction to a first-order evolution,
application of Proposition \ref{prop:CK-upgrade}, and passage back to the
original \(m\)-th order operator.

\subsubsection{Spectral and coordinate localization}
Choose \(\psi\in C_c^\infty((1/2,2))\), supported in a sufficiently small
fixed neighborhood of \(1\), such that \(\psi=1\) on a smaller neighborhood
of \(1\), and set
\[
    u_{\mathrm{ch}}:=\psi(A/\lambda)u.
\]
The $\lambda$-dependent wave-front set of
\(u_{\mathrm{ch}}\) is contained in a fixed small neighborhood of the
characteristic set. The support of \(\psi\) is chosen small enough that this
neighborhood is covered by the characteristic patches constructed below.

We cover \(M\) by finitely many coordinate patches \(U_j\), with coordinate
maps
\[
    \kappa_j:U_j\longrightarrow V_j\subset\mathbb R^d,
\]
and choose a subordinate partition of unity \((\rho_j)_j\). If
\(dV_g=\omega_j(x)\,dx\) in the \(j\)-th patch, let
\[
    T_jf(x):=\omega_j(x)^{1/2}f\bigl(\kappa_j^{-1}(x)\bigr).
\]
For every \(1<r<\infty\),
\[
    \|T_jf\|_{L^r(V_j)}\asymp\|f\|_{L^r(U_j)},
\]
with constants uniform over the finite atlas. We choose the coordinate
supports inside sets of the form \(I\times\mathbb R^{d-1}\), where
\(|I|\leq1\), at a fixed positive distance from the endpoints of \(I\), and
extend localized functions by zero. The same comparison holds for the sum
space defined in \eqref{eq:Ylambda}:
\begin{equation}\label{eq:local-Y-comparison}
    \|T_j(\rho_jF)\|_{\mathcal Y_\lambda(I\times\mathbb R^{d-1})}
    \lesssim
    \|F\|_{\mathcal Y_\lambda(M)}.
\end{equation}
Indeed, \(T_j\rho_j\) is bounded separately on \(L^2\) and \(L^p\), so
\eqref{eq:local-Y-comparison} follows by applying these two bounds to an
arbitrary admissible decomposition of \(F\) and taking the infimum.

\subsubsection{First-order reduction and verification of the localized hypotheses}
Let \(A_{\mathrm{loc}}\) be the density-conjugated local representative of
\(A\). Its principal symbol is
\[
    a_1(x,\xi)=p_m(x,\xi)^{1/m}.
\]
Fix a point \((x_0,\xi_0)\) on \(\{a_1=\lambda\}\). Since \(a_1\) is
positive and homogeneous of degree one in \(\xi\), Euler's identity implies
\(\partial_\xi a_1(x_0,\xi_0)\neq0\); see, for example,
\cite[Proposition 0.5.4]{CS}. After a linear change of coordinates, write
\[
    x=(t,y),
    \qquad
    \xi=(\tau,\eta),
\]
so that \(\partial_\tau a_1\neq0\) at the chosen point. Shrinking the
coordinate and conic neighborhoods, we may assume that
\[
    |\partial_\tau a_1(t,y,\tau,\eta)|\geq c>0
\]
throughout the patch contained in the region
\(\mathcal B_\lambda\). The implicit function theorem then gives a real-valued
smooth function \(a_\lambda(t,y,\eta)\) such that
\[
    a_1\bigl(t,y,a_\lambda(t,y,\eta),\eta\bigr)=\lambda.
\]
Moreover,
\begin{equation}\label{eq:first-order-symbol-factorization}
    a_1(t,y,\tau,\eta)-\lambda
    =e_\lambda(t,y,\tau,\eta)
      \bigl(\tau-a_\lambda(t,y,\eta)\bigr),
\end{equation}
where
\[
    e_\lambda(t,y,\tau,\eta)
    :=\int_0^1
        \partial_\tau a_1\bigl(
            t,y,
            a_\lambda(t,y,\eta)
            +s(\tau-a_\lambda(t,y,\eta)),
            \eta
        \bigr)\,ds.
\]
After one further shrinking, \(e_\lambda\) is bounded away from zero and is
therefore elliptic of order zero on the patch.

We extend \(a_\lambda\) from the projection of the smaller patch to a
globally defined real-valued symbol which agrees with the local root near that
projection, equals \(\lambda\) for \(|\eta|\geq C\lambda\), and is constant
outside a fixed compact set in \((t,y)\). The extension can be chosen so that
\begin{equation}\label{eq:a-lambda-symbol-bounds}
    \bigl|
        \partial_t^j\partial_y^\alpha\partial_\eta^\beta
        a_\lambda(t,y,\eta)
    \bigr|
    \lesssim_{j,\alpha,\beta}
    \lambda^{1-|\beta|}.
\end{equation}
Define
\[
    A_\lambda(t):=
    \operatorname{Op}_y^w\bigl(a_\lambda(t,y,\eta)\bigr),
    \qquad
    \mathcal L_\lambda:=D_t-A_\lambda(t).
\]
Writing \(a_\lambda=\lambda+b_\lambda\), where \(b_\lambda\) is supported
in \(|\eta|\lesssim\lambda\), and applying the Calder\'on--Vaillancourt Theorem, see for example
\cite[Theorem 4.23]{zw22}, to \eqref{eq:a-lambda-symbol-bounds}, we
obtain
\begin{equation}\label{eq:verified-A-lambda-L2-bound}
    \sup_{t\in I}
    \|A_\lambda(t)\|_{L^2_y\to L^2_y}
    \lesssim\lambda.
\end{equation}
The symbol is real and the quantization is Weyl, so \(A_\lambda(t)\) is
bounded and self-adjoint. The estimates with one \(t\)-derivative imply norm
continuity of \(t\mapsto A_\lambda(t)\). Hence by Dyson expansion \(A_\lambda(t)\) generates a
unitary two-parameter propagator \(S(t,s)\); see
\cite[Chapter X.12]{RSII}. This verifies the propagator and operator-bound
hypotheses of Proposition \ref{prop:CK-upgrade}.

For this patch, fix two real-valued transverse cutoffs
\[
    \chi_\lambda\prec\widetilde\chi_\lambda
\]
whose lifted supports are compactly contained in the region where
\eqref{eq:first-order-symbol-factorization} holds. We choose their time
projections inside a fixed compact subinterval of \(I\). Constructing them
from fixed compactly supported functions after the rescaling
\(\eta=\lambda\xi\) gives the uniform symbol bounds
\eqref{eq:transverse-cutoff-symbol-bounds} and the uniform
nesting at frequency scale $\lambda$ required in Proposition \ref{prop:CK-upgrade}.

We next verify its commutator hypothesis. Set
\[
    \alpha_\lambda(t,y,\xi)
    :=\lambda^{-1}a_\lambda(t,y,\lambda\xi),
    \qquad
    q_\lambda(t,y,\xi)
    :=\widetilde\chi_\lambda(t,y,\lambda\xi).
\]
By \eqref{eq:a-lambda-symbol-bounds} and
\eqref{eq:transverse-cutoff-symbol-bounds}, both families are bounded in
\(S^0\), and
\[
    A_\lambda(t)=\lambda\operatorname{Op}_h^w(\alpha_\lambda(t)),
    \qquad
    \widetilde\chi_{\lambda,t}^w
    =\operatorname{Op}_h^w(q_\lambda(t)).
\]
Weyl calculus therefore gives
\[
    [A_\lambda(t),\widetilde\chi_{\lambda,t}^w]
    \in\Psi_h^0
\]
uniformly: the commutator of the two order-zero operators gains
a factor \( \lambda^{-1}\), which cancels the prefactor \(\lambda\). Also,
because \(D_t=-i\partial_t\),
\[
    [D_t,\widetilde\chi_{\lambda,t}^w]
    =-i(\partial_t\widetilde\chi_\lambda)_t^w
    \in\Psi_h^0
\]
uniformly. Applying Calder\'on--Vaillancourt again, now yields
\begin{equation}\label{eq:verified-cutoff-commutator}
    \sup_{t\in I}
    \bigl\|
        [\mathcal L_\lambda,
         \widetilde\chi_{\lambda,t}^w]
    \bigr\|_{L^2_y\to L^2_y}
    \lesssim1.
\end{equation}

We now record the microlocal factorization used twice below. 
If the phase-space cutoffs, 
\(\Theta_{0,\lambda}\prec\Theta_{1,\lambda}\prec
\Theta_{2,\lambda}\), quantized in all \((t,y)\)-variables, are supported in the present patch, then
\eqref{eq:first-order-symbol-factorization} and the parameter-dependent Weyl
calculus give order-zero operators \(E_\lambda,F_\lambda\) and uniformly
\(L^2\)-bounded remainders \(R_\lambda,R_\lambda'\) such that, microlocally
on the support of \(\Theta_{0,\lambda}\),
\begin{subequations}\label{eq:quantized-first-order-factorizations}
\begin{align}
    \Theta_{0,\lambda}^w(A_{\mathrm{loc}}-\lambda)
        \Theta_{1,\lambda}^w
    &=\Theta_{0,\lambda}^wE_\lambda\Theta_{2,\lambda}^w
        \mathcal L_\lambda\Theta_{1,\lambda}^w+R_\lambda,
        \label{eq:forward-first-order-factorization}
    \\
    \Theta_{0,\lambda}^w\mathcal L_\lambda
        \Theta_{1,\lambda}^w
    &=\Theta_{0,\lambda}^wF_\lambda\Theta_{2,\lambda}^w
        (A_{\mathrm{loc}}-\lambda)
        \Theta_{1,\lambda}^w+R_\lambda'.
        \label{eq:converse-first-order-factorization}
\end{align}
\end{subequations}
Here \(E_\lambda\) has principal symbol \(e_\lambda\), whereas
\(F_\lambda\) has principal symbol \(e_\lambda^{-1}\), after harmless
cutoffs have been incorporated into these symbols. Commutators with the
nested cutoffs and the subprincipal part of \(A_{\mathrm{loc}}\) have order
zero and are included in the remainders. Terms involving separated cutoffs
are \(O(\lambda^{-N})\) between arbitrary $\lambda$-dependent Sobolev spaces, for
every \(N\). This proves \eqref{eq:quantized-first-order-factorizations}; see
also \cite[Chapters 4 and 12]{zw22}.

We verify the homogeneous hypothesis
\eqref{eq:homogeneous-first-order} for the two fixed transverse cutoffs. Let
\(\theta_\lambda\) denote either \(\chi_\lambda\) or
\(\widetilde\chi_\lambda\). Choose a full cutoff
\(\Theta_\lambda(t,y,\tau,\eta)\), supported in the present patch, whose
symbol equals \(\theta_\lambda(t,y,\eta)\) in a fixed neighborhood of
\(\tau=a_\lambda(t,y,\eta)\). A spectral cutoff which equals one on the
frequency support of \(\Theta_\lambda\) may be inserted modulo an
\(O(\lambda^{-N})\) smoothing operator for every \(N\). Thus the
spectral-localization condition in \eqref{eq:quasimode-A} is satisfied.
The local form of that estimate and
\eqref{eq:forward-first-order-factorization} give
\[
\begin{aligned}
    \|\Theta_\lambda^w \tilde{v}\|_{L^{p'}_{t,y}(I\times\R^{d-1})}
    &\lesssim
    \lambda^\gamma
    \left(
        \|\Theta_\lambda^w\tilde{v}\|_{L^2_{t,y}(I\times\R^{d-1})}
        +\|(A_{\mathrm{loc}}-\lambda)
              \Theta_\lambda^w\tilde{v}\|_{L^2_{t,y}(I\times\R^{d-1})}
    \right)
    \\
    &\lesssim
    \lambda^\gamma
    \left(
        \|\tilde{v}\|_{L^2_{t,y}(I\times\R^{d-1})}
        +\|\mathcal L_\lambda \tilde{v}\|_{L^2_{t,y}(I\times\R^{d-1})}
    \right).
\end{aligned}
\]
In the second line, commutators with the nested cutoffs and all lower-order
terms are absorbed into \(\|\tilde{v}\|_2\).


The full symbol of
\(\theta_{\lambda,t}^w-\Theta_\lambda^w\) is supported where
\(\tau-a_\lambda(t,y,\eta)\) is elliptic. Hence a
parameter-dependent parametrix gives
\[
    \theta_{\lambda,t}^w-\Theta_\lambda^w
    =Q_\lambda\mathcal L_\lambda+R_\lambda,
\]
where \(Q_\lambda\) has order \(-1\) on frequencies of size \(\lambda\)
and \(R_\lambda\) is rapidly smoothing. Parameter-dependent Sobolev
embedding, see the arguments in Step 3.2 in Proposition \ref{prop:CK-upgrade}, therefore gives
\[
    \|Q_\lambda f\|_{L^{p'}_{t,y}(I\times\R^{d-1})}
    +\|R_\lambda \tilde{v}\|_{L^{p'}_{t,y}(I\times\R^{d-1})}
    \lesssim
    \lambda^\gamma
    \bigl(\|f\|_{L^2_{t,y}(I\times\R^{d-1})}+\|\tilde{v}\|_{L^2_{t,y}(I\times\R^{d-1})}\bigr);
\]
here one uses
\(d(1/2-1/p')-1\leq\gamma\), which holds in the admissible range.
Applying this with \(f=\mathcal L_\lambda \tilde{v}\) proves that the complementary
term satisfies the same estimate. Consequently,
\begin{equation}\label{eq:verified-homogeneous-first-order}
    \|\theta_{\lambda,t}^w\tilde{v}\|_{L^{p'}_{t,y}(I\times\mathbb R^{d-1})}
    \lesssim
    \lambda^\gamma
    \left(
        \|\tilde{v}\|_{L^2_{t,y}(I\times\mathbb R^{d-1})}
        +\|\mathcal L_\lambda \tilde{v}\|_{L^2_{t,y}(I\times\mathbb R^{d-1})}
    \right)
\end{equation}
for \(\theta_\lambda=\chi_\lambda\) and
\(\theta_\lambda=\widetilde\chi_\lambda\), with uniform constants. Thus \eqref{eq:homogeneous-first-order} holds, and hence all the assumptions of Proposition \ref{prop:CK-upgrade} are verified.

\subsubsection{Application of the localized Christ--Kiselev estimate}
We turn to the function to which the proposition will be applied. Fix one
coordinate patch and one of the finitely many microlocal patches, suppress
their indices, and write
\[
    v_{\mathrm{loc}}:=T_j(\rho_j u_{\mathrm{ch}}).
\]
Since \([A,\rho_j]\) has order zero,
\begin{equation}\label{eq:local-A-residual}
    (A_{\mathrm{loc}}-\lambda)v_{\mathrm{loc}}
    =
    T_j\bigl(\rho_j(A-\lambda)u_{\mathrm{ch}}\bigr)+r_j,
    \qquad
    \|r_j\|_{L^2_{t,y}(I\times\R^{d-1})}
    \lesssim
    \|u\|_{L^2(M)}.
\end{equation}
The function
\[
    \widetilde\chi_{\lambda,t}^wv_{\mathrm{loc}}
\]
has time support compactly contained in \(I\). After inserting the full
phase-space cutoffs around the lifted support of
\(\widetilde\chi_\lambda\), the converse factorization
\eqref{eq:converse-first-order-factorization} and
\eqref{eq:local-A-residual} give
\[
\begin{aligned}
    \widetilde\chi_{\lambda,t}^w
        \mathcal L_\lambda v_{\mathrm{loc}}
    &=
    B_\lambda
    (A_{\mathrm{loc}}-\lambda)v_{\mathrm{loc}}
    +
    G_\lambda v_{\mathrm{loc}}
    \\
    &=
    B_\lambda T_j\bigl(\rho_j(A-\lambda)u_{\mathrm{ch}}\bigr)
    +r_\lambda,
\end{aligned}
\]
where
\[
    r_\lambda:=B_\lambda r_j+G_\lambda v_{\mathrm{loc}},
\]
and \(B_\lambda\) and \(G_\lambda\) are properly supported, uniformly
parameter-dependent operators of order zero; rapidly decaying smoothing
terms are absorbed into \(G_\lambda\). Consequently,
\[
    \|r_\lambda\|_{\mathcal Y_\lambda(I\times\R^{d-1})}
    \lesssim
    \|u\|_{L^2(M)}.
\]
We now split
\begin{align}\label{eq:characteristic-splitting}
\begin{split}
    \|\chi_{\lambda,t}^wv_{\mathrm{loc}}\|
        _{L^{p'}_{t,y}(I\times\R^{d-1})}
    &\leq
    \|\chi_{\lambda,t}^w
        \widetilde\chi_{\lambda,t}^wv_{\mathrm{loc}}
    \|_{L^{p'}_{t,y}(I\times\R^{d-1})}
    \\
    &\quad+
    \|\chi_{\lambda,t}^w
        (I-\widetilde\chi_{\lambda,t}^w)v_{\mathrm{loc}}
    \|_{L^{p'}_{t,y}(I\times\R^{d-1})}
    \\
    &=:I+II.
\end{split}
\end{align}
For the first term, Proposition \ref{prop:CK-upgrade} and the identity
\[
\begin{aligned}
    \mathcal L_\lambda
        \widetilde\chi_{\lambda,t}^wv_{\mathrm{loc}}
    &=
    \widetilde\chi_{\lambda,t}^w
        \mathcal L_\lambda v_{\mathrm{loc}}
    +
    [\mathcal L_\lambda,\widetilde\chi_{\lambda,t}^w]
        v_{\mathrm{loc}}
\end{aligned}
\]
yield
\begin{align*}
    I
    &\lesssim
    \lambda^\gamma
    \left(
        \|\widetilde\chi_{\lambda,t}^wv_{\mathrm{loc}}\|
            _{L^2_{t,y}(I\times\R^{d-1})}
        +
        \|\mathcal L_\lambda
            \widetilde\chi_{\lambda,t}^wv_{\mathrm{loc}}\|
            _{\mathcal Y_\lambda(I\times\R^{d-1})}
    \right)
    \\
    &\lesssim
    \lambda^\gamma
    \left(
        \|\widetilde\chi_{\lambda,t}^wv_{\mathrm{loc}}\|
            _{L^2_{t,y}(I\times\R^{d-1})}
        +
        \|\widetilde\chi_{\lambda,t}^w
            \mathcal L_\lambda v_{\mathrm{loc}}\|
            _{\mathcal Y_\lambda(I\times\R^{d-1})}
    \right.
    \\
    &\hspace{5.5cm}\left.
        +
        \|[\mathcal L_\lambda,\widetilde\chi_{\lambda,t}^w]
            v_{\mathrm{loc}}\|
            _{\mathcal Y_\lambda(I\times\R^{d-1})}
    \right)
    \\
    &\lesssim
    \lambda^\gamma
    \left(
        \|v_{\mathrm{loc}}\|
            _{L^2_{t,y}(I\times\R^{d-1})}
        +
        \|B_\lambda
            T_j\bigl(\rho_j(A-\lambda)u_{\mathrm{ch}}\bigr)\|
            _{\mathcal Y_\lambda(I\times\R^{d-1})}
    \right.
    \\
    &\hspace{3.2cm}\left.
        +
        \|r_\lambda\|
            _{\mathcal Y_\lambda(I\times\R^{d-1})}
        +
        \|[\mathcal L_\lambda,\widetilde\chi_{\lambda,t}^w]
            v_{\mathrm{loc}}\|
            _{\mathcal Y_\lambda(I\times\R^{d-1})}
    \right)
    \\
    &\lesssim
    \lambda^\gamma
    \left(
        \|u\|_{L^2(M)}
        +
        \|(A-\lambda)u_{\mathrm{ch}}\|
            _{\mathcal Y_\lambda(M)}
    \right).
\end{align*}
Here the last line follows from
\eqref{eq:local-Y-comparison},
\eqref{eq:verified-cutoff-commutator}, and the uniform boundedness of
order-zero operators on \(\mathcal Y_\lambda\).

For the second term, set
\[
    R_{\lambda,t}
    :=
    \chi_{\lambda,t}^w
    (I-\widetilde\chi_{\lambda,t}^w).
\]
Uniform nesting and the separated-support argument from Step \(3.2\) in
Proposition \ref{prop:CK-upgrade} give, for every \(N\geq0\),
\[
\begin{aligned}
    II
    &=
    \|R_{\lambda,t}v_{\mathrm{loc}}\|
        _{L^{p'}_{t,y}(I\times\R^{d-1})}
    \\
    &\leq
    C_N\lambda^{-N}\|u\|_{L^2(M)}.
\end{aligned}
\]
Combining the estimates for \(I\) and \(II\), and taking \(N\) sufficiently
large, we obtain
\begin{equation}\label{eq:characteristic-A}
    \|\chi_{\lambda,t}^wT_j(\rho_ju_{\mathrm{ch}})\|
        _{L^{p'}_{t,y}(I\times\R^{d-1})}
    \lesssim
    \lambda^\gamma
    \left(
        \|u\|_{L^2(M)}
        +
        \|(A-\lambda)u_{\mathrm{ch}}\|
            _{\mathcal Y_\lambda(M)}
    \right).
\end{equation}

\subsubsection{Passage back to the \(m\)-th order operator}

We finally rewrite the residual in terms of \(P-\lambda^m\). Define
\[
    b(r):=
    \begin{cases}
        \displaystyle
        \psi(r)\frac{r-1}{r^m-1},&r\neq1,\\[1.2ex]
        \displaystyle\frac{\psi(1)}{m},&r=1.
    \end{cases}
\]
The singularity at \(r=1\) is removable, so
\(b\in C_c^\infty((1/2,2))\). Since \(P=A^m\), the spectral theorem gives
the exact identity
\begin{equation}\label{eq:power-rewrite}
    (A-\lambda)u_{\mathrm{ch}}
    =\lambda^{1-m}b(A/\lambda)(P-\lambda^m)u.
\end{equation}
The parameter-dependent functional calculus makes \(b(A/\lambda)\) a
uniform family of order-zero pseudodifferential operators. It is therefore
uniformly bounded on both \(L^2(M)\) and \(L^p(M)\), and hence on
\(\mathcal Y_\lambda(M)\). Thus
\begin{equation}\label{eq:power-rewrite-Y}
    \|(A-\lambda)u_{\mathrm{ch}}\|_{\mathcal Y_\lambda(M)}
    \lesssim
    \lambda^{1-m}
    \|(P-\lambda^m)u\|_{\mathcal Y_\lambda(M)}.
\end{equation}

By the choice of the support of \(\psi\), finitely many of the preceding
microlocal patches cover the wave-front set of
\(u_{\mathrm{ch}}\). A microlocal partition of unity, the local norm
equivalences, and the rapidly smoothing remainder therefore give
\[
    \|u_{\mathrm{ch}}\|_{L^{p'}(M)}
    \lesssim
    \sum_{j,\nu}
    \bigl\|
        \chi_{j,\nu,\lambda}^w
        T_j(\rho_ju_{\mathrm{ch}})
    \bigr\|_{L^{p'}_{t,y}(I\times\R^{d-1})}
    +\|u\|_{L^2(M)}.
\]
The number of patches is independent of \(\lambda\). Summing
\eqref{eq:characteristic-A} and using \eqref{eq:power-rewrite-Y}, we obtain
\begin{equation}\label{eq:characteristic-P}
    \|u_{\mathrm{ch}}\|_{L^{p'}(M)}
    \lesssim
    \lambda^\gamma\|u\|_{L^2(M)}
    +\lambda^{\gamma+1-m}
    \|(P-\lambda^m)u\|_{\mathcal Y_\lambda(M)}.
\end{equation}

\subsection{Elliptic frequencies and the global direct estimate}

It remains to estimate
\[
    u_{\mathrm{ell}}:=(1-\psi(A/\lambda))u.
\]
Set
\[
    F:=(P-\lambda^m)u.
\]
We define \(T_\lambda\) by the spectral calculus as the multiplier with
symbol
\[
    m_\lambda(\mu)
    :=
    \frac{1-\psi(\mu/\lambda)}{\mu^m-\lambda^m},
    \qquad \mu\geq0.
\]
Since \(1-\psi\) vanishes in a neighborhood of \(1\), the quotient is smooth
across \(\mu=\lambda\), and the spectral theorem gives
\[
    u_{\mathrm{ell}}=T_\lambda F.
\]
Put
\[
    \alpha:=d\left(\frac12-\frac1{p'}\right).
\]
Sobolev embedding and its dual give
\begin{equation}\label{eq: sobolev-emb}
    H^\alpha(M)\hookrightarrow L^{p'}(M),
    \qquad
    L^p(M)\hookrightarrow H^{-\alpha}(M).
\end{equation}
Because \(1-\psi(\mu/\lambda)\) removes the region \(\mu\asymp\lambda\),
we have, for \(\beta=\alpha\) and \(\beta=2\alpha\),

\begin{equation}\label{eq:elliptic-multiplier-bound}
    \sup_{\mu\geq0}
    \frac{\langle\mu\rangle^\beta
    |1-\psi(\mu/\lambda)|}
    {|\mu^m-\lambda^m|}
    \lesssim
    \lambda^{\beta-m}.
\end{equation}

To see this, for \(\mu\lesssim\lambda\) away from the deleted characteristic
region one has \(|\mu^m-\lambda^m|\gtrsim\lambda^m\), whereas for
\(\mu\gtrsim\lambda\) one has
\(|\mu^m-\lambda^m|\gtrsim\mu^m\). The admissible range of exponents ensures
that \(2\alpha<m\), so the high-frequency expression is bounded.

By the spectral theorem, \eqref{eq:elliptic-multiplier-bound} implies
\[
    \|T_\lambda\|_{L^2(M)\to H^\alpha(M)}
    \lesssim
    \lambda^{\alpha-m}
\]
and
\[
    \|T_\lambda\|_{H^{-\alpha}(M)\to H^\alpha(M)}
    \lesssim
    \lambda^{2\alpha-m}.
\]
Combining these bounds with the two Sobolev embeddings \eqref{eq: sobolev-emb} gives
\begin{subequations}\label{eq:elliptic-two-bounds}
\begin{align}
    \|T_\lambda h_2\|_{L^{p'}(M)}
    &\lesssim
    \lambda^{\alpha-m}\|h_2\|_{L^2(M)},
    \\
    \|T_\lambda h_p\|_{L^{p'}(M)}
    &\lesssim
    \lambda^{2\alpha-m}\|h_p\|_{L^p(M)}.
\end{align}
\end{subequations}
From the definition of \(\gamma=\nu(p')\),
\begin{equation}\label{eq:alpha-gamma}
    \alpha\leq\gamma+\frac12.
\end{equation}
Hence, for \(\lambda\geq1\),
\[
    \lambda^{\alpha-m}
    \lesssim
    \lambda^{\gamma+1-m},
    \qquad
    \lambda^{2\alpha-m}
    \lesssim
    \lambda^{2\gamma+1-m}.
\]
Let \(F=h_2+h_p\) be an arbitrary decomposition. From
\eqref{eq:elliptic-two-bounds},
\[
\begin{aligned}
    \|u_{\mathrm{ell}}\|_{L^{p'}(M)}
    &\leq
    \|T_\lambda h_2\|_{L^{p'}(M)}
    +
    \|T_\lambda h_p\|_{L^{p'}(M)}
    \\
    &\lesssim
    \lambda^{\gamma+1-m}
    \left(
        \|h_2\|_{L^2(M)}
        +\lambda^\gamma\|h_p\|_{L^p(M)}
    \right).
\end{aligned}
\]
Taking the infimum over all decompositions \(F=h_2+h_p\) gives
\begin{equation}\label{eq:elliptic-Y}
    \|u_{\mathrm{ell}}\|_{L^{p'}(M)}
    \lesssim
    \lambda^{\gamma+1-m}
    \|(P-\lambda^m)u\|_{\mathcal Y_\lambda(M)}.
\end{equation}
Combining \eqref{eq:characteristic-P} and \eqref{eq:elliptic-Y} proves the
estimate that drives the resolvent argument.
\begin{proposition}[Global direct estimate]\label{prop:global-direct}
For every \(\lambda\geq\lambda_0\) and every smooth \(u\),
\begin{equation}\label{eq:global-direct}
    \|u\|_{L^{p'}(M)}
    \lesssim
    \lambda^\gamma\|u\|_{L^2(M)}
    +
    \lambda^{\gamma+1-m}
    \|(P-\lambda^m)u\|_{\mathcal Y_\lambda(M)}.
\end{equation}
\end{proposition}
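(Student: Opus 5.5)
The plan is to split $u=u_{\mathrm{ch}}+u_{\mathrm{ell}}$ with $u_{\mathrm{ch}}=\psi(A/\lambda)u$ and $u_{\mathrm{ell}}=(1-\psi(A/\lambda))u$, exactly as in the preceding two subsections. The triangle inequality then gives
\[
    \|u\|_{L^{p'}(M)}\le\|u_{\mathrm{ch}}\|_{L^{p'}(M)}+\|u_{\mathrm{ell}}\|_{L^{p'}(M)}.
\]
The proof is then a matter of bookkeeping: each piece is bounded by one of the two estimates already established, and the result is added up.

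For the characteristic piece I would invoke \eqref{eq:characteristic-P} directly. It bounds $\|u_{\mathrm{ch}}\|_{L^{p'}}$ by $\lambda^\gamma\|u\|_{L^2}+\lambda^{\gamma+1-m}\|(P-\lambda^m)u\|_{\mathcal Y_\lambda(M)}$, which is already the desired right-hand side. Behind it sit the localized Christ--Kiselev upgrade (Proposition~\ref{prop:CK-upgrade}), the verification of its hypotheses through the first-order factorization, the finite microlocal cover, and the exact identity \eqref{eq:power-rewrite}. The rapidly smoothing remainders in the microlocal partition of unity contribute $O(\lambda^{-N})\|u\|_{L^2}$, and this is absorbed into $\lambda^\gamma\|u\|_{L^2}$ because $\gamma\ge0$ and $\lambda\ge\lambda_0\ge1$.

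For the elliptic piece I would use \eqref{eq:elliptic-Y}. That bound was obtained from the identity $u_{\mathrm{ell}}=T_\lambda(P-\lambda^m)u$, the Sobolev embeddings \eqref{eq: sobolev-emb}, and the multiplier bound \eqref{eq:elliptic-multiplier-bound} together with $\alpha\le\gamma+\tfrac12$, followed by taking the infimum over decompositions. It gives a contribution $\lambda^{\gamma+1-m}\|(P-\lambda^m)u\|_{\mathcal Y_\lambda}$ with no $L^2$ term at all. Adding the two bounds yields \eqref{eq:global-direct}.

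The only point requiring care is that all constants are uniform in $\lambda\ge\lambda_0$. This means the number of coordinate and microlocal patches must not depend on $\lambda$, and the cutoffs must be uniformly adapted; both were arranged in the construction. One also needs $u$ smooth, so that $(P-\lambda^m)u\in L^2\subset\mathcal Y_\lambda$ and all the identities hold pointwise. Since the substantive difficulty, namely the Christ--Kiselev step, has already been handled, I expect no real obstacle here beyond checking this uniformity.
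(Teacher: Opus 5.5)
Your proposal is correct and matches the paper's proof: Proposition~\ref{prop:global-direct} is obtained by splitting $u=u_{\mathrm{ch}}+u_{\mathrm{ell}}$ and adding the characteristic bound \eqref{eq:characteristic-P} to the elliptic bound \eqref{eq:elliptic-Y}, where the latter has no $L^2$ term. Your remarks on absorbing the $O(\lambda^{-N})\|u\|_{L^2}$ remainders into $\lambda^\gamma\|u\|_{L^2}$, and on the number of patches being independent of $\lambda$, are the same bookkeeping the paper performs in deriving \eqref{eq:characteristic-P}.
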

All localization errors, commutators, and lower-order terms have been placed
in the \(L^2(M)\)-part of \(\mathcal Y_\lambda(M)\). 

\subsection{Completion of the boundary resolvent estimate}
\begin{proof}[Proof of Equation \ref{eqithm-boundary}]

Let \(f\in L^p(M)\cap L^2(M)\) and set
\begin{equation}\label{eq: resolvent equation}
     u:=R_\lambda^\pm f
    =\bigl(P-(\lambda\pm i)^m\bigr)^{-1}f.
\end{equation}
The second half-resolvent estimate in \eqref{eq:half-resolvent} gives
\begin{equation}\label{eq:u-L2-final}
    \|u\|_{L^2(M)}
    \lesssim
    \lambda^{\gamma+1-m}\|f\|_{L^p(M)}.
\end{equation}
Moreover, the resolvent equation \eqref{eq: resolvent equation} implies
\[
    (P-\lambda^m)u
    =
    f+\bigl((\lambda\pm i)^m-\lambda^m\bigr)u.
\]
We estimate the first term in the \(L^p(M)\)-part and the second term in the
\(L^2(M)\)-part of \(\mathcal Y_\lambda(M)\). Therefore,

\begin{align}\label{eq: last Resolventestimate2}
\begin{split}
    \|(P-\lambda^m)u\|_{\mathcal Y_\lambda(M)}
    &\leq
    \lambda^\gamma\|f\|_{L^p(M)}
    +
    |(\lambda\pm i)^m-\lambda^m|\,\|u\|_{L^2(M)}
    \\
    &\lesssim
    \lambda^\gamma\|f\|_{L^p(M)}
    +
    \lambda^{m-1}\|u\|_{L^2(M)}
    \\
    &\lesssim
    \lambda^\gamma\|f\|_{L^p(M)},
    \end{split}
\end{align}

where we used \eqref{eq:u-L2-final}. Inserting the last two bounds \eqref{eq:u-L2-final} and \eqref{eq: last Resolventestimate2} into
\eqref{eq:global-direct}, we obtain

\[
\begin{aligned}
    \|R_\lambda^\pm f\|_{L^{p'}(M)}
    &\lesssim
    \lambda^\gamma
    \lambda^{\gamma+1-m}\|f\|_{L^p(M)}
    +
    \lambda^{\gamma+1-m}
    \lambda^\gamma\|f\|_{L^p(M)}
    \\
    &\lesssim
    \lambda^{2\gamma+1-m}\|f\|_{L^p(M)}.
\end{aligned}
\]

By density, the estimate extends to every \(f\in L^p(M)\). Recalling that
\(\gamma=\nu(p')\), and enlarging the constant on the compact part of the
contour, we conclude that

\[
    \left\|
        \bigl(P-(\lambda\pm i)^m\bigr)^{-1}
    \right\|_{L^p(M)\to L^{p'}(M)}
    \lesssim
    \langle\lambda\rangle^{2\nu(p')+1-m}.
\]

This is \eqref{eqithm-boundary} and completes the proof.

\end{proof}

\section{Complex extension: Proof of Equation \ref{eqithm-region}}\label{sec: complex ext}
We extend the boundary estimate \eqref{eqithm-boundary} to the
whole region \(\Xi\). 

Recall that Theorem~\ref{eqithm-regionuation} gives the
uniform bound
\begin{equation}\label{eqi}
    \bigl\|
        \bigl(P-(\lambda\pm i)^m\bigr)^{-1}
    \bigr\|_{L^p(M)\to L^{p'}(M)}
    \lesssim
    \langle\lambda\rangle^{2\nu(p')+1-m}.
\end{equation}
Set
\[
    \beta:=1-\frac{2\nu(p')+1}{m}.
\]
Since \(\Xi_0\cap[0,\infty)=\emptyset\), we fix a branch of the logarithm on
\(\mathbb C\setminus[0,\infty)\), and define \(z^\beta\) with respect to this
branch.
Let \(f,g\in L^p(M)\) with $\|f\|_{L^p(M)}=\|g\|_{L^p(M)}=1.$
For \(z\in\Xi_0\), define
\[
    F_{f,g}(z)
    :=
    \left\langle
        \bigl(P-z\bigr)^{-1}f,g
    \right\rangle
    z^\beta .
\]
Since \(\Xi_0\cap[0,\infty)=\emptyset\), the resolvent $z\mapsto (P-z)^{-1}$
is holomorphic on \(\Xi_0\). Hence \(F_{f,g}\) is holomorphic on \(\Xi_0\). Moreover, \(F_{f,g}\) is continuous on \(\Gamma\),  which follows from equation \eqref{Gammacontinuous}, see Figure~\ref{fig}.

We record a crude growth estimate  in \[ \{z\in\mathbb C:\operatorname{dist}(z,\mathbb R_+)\geq 1/10\} \supset \Xi. \]
Since \((P+1)^{-1/2}\) is an elliptic operator of order \(-m/2\), the Sobolev embedding applies under the assumption \eqref{eq: admissible}, see e.g. \cite[Chapter 13]{taylor2010partial}. Together with duality, this gives \[ (P+1)^{-1/2}:L^p(M)\to L^2(M), \qquad (P+1)^{-1/2}:L^2(M)\to L^{p'}(M). \] Hence, using the factorization \[ (P-z)^{-1} = (P+1)^{-1/2} \bigl((P+1)^{1/2}(P-z)^{-1}(P+1)^{1/2}\bigr) (P+1)^{-1/2}, \] we obtain \begin{align} \begin{split} \|(P-z)^{-1}\|_{L^p(M)\to L^{p'}(M)} &\lesssim \bigl\|(P+1)^{1/2}(P-z)^{-1}(P+1)^{1/2}\bigr\|_{L^2(M)\to L^2(M)} \\ &= \sup_{\mu\in\operatorname{spec}(P)} \frac{\mu+1}{|\mu-z|} \\ &\leq 1+\frac{|z+1|}{\operatorname{dist}(z,\mathbb R_+)} \lesssim 1+|z|. \end{split} \end{align} 
This shows that $F_{f,g}$ has at most polynomial, and hence subexponential growth in this region. Therefore, the Phragmén--Lindelöf principle implies that the function \(F_{f,g}\) is bounded on \(\Xi=\Xi_0\cup\Gamma\), which yields estimate \eqref{eqithm-region}.

\end{proof}

\section{Bounds near singularities: Proof of Equation \ref{secondequation} }\label{sec: boundsnearsing}
\begin{proof}
We now consider \(z\notin\Xi\)
and prove the estimate \eqref{secondequation}. We follow the ideas of \cite{CJC}.

Throughout this step, whenever \(z\notin\Xi\), we write \(z^{1/m}\) for a
determination \(w\) satisfying $w^m=z,$ and $|\Im w|\leq 1$.
In the only place where a choice is relevant, namely on the boundary curve
\(\Gamma\), this convention is understood through the parametrization
\(z=(\lambda\pm i)^m\), so that \(z^{1/m}=\lambda\pm i\). In particular,
for \(z\in\Gamma\) one has $|\Im(z^{1/m})|=1.$
Only the real part \(\Re(z^{1/m})\) enters the estimates below, and this
quantity is unambiguous in the region under consideration.

\textbf{Step 1:} We prove \eqref{secondequation} in the case \(z\notin\Xi\) and
\(\Re(z^{1/m})\le \exp(1000m)\). The precise threshold \(\exp(1000m)\) is somewhat arbitrary. It is chosen only
so that, in the complementary regime
\(\Re(z^{1/m})>\exp(1000m)\), the complex-analytic estimates
\eqref{complexana}, \eqref{complex2}, and \eqref{complex3} become transparent.
 
 We shall use the Sobolev embedding
\begin{equation}\label{Sobolevemb}
     \|f\|_{L^{p'}(M)}
     \lesssim
     \left\|
        (P+1)^{\frac{d}{m}(\frac12-\frac1{p'})}f
     \right\|_{L^2(M)},
\end{equation}
see e.g. \cite[ch.13]{taylor2010partial} or \cite{hebey2000nonlinear,zw22}.

Now we follow the proof of \cite[Lemma~2.3]{CJC}, replacing the Sobolev embedding used there by the more general embedding \eqref{Sobolevemb}, to obtain
\begin{equation}
    \left\|\left(P-z\right)^{-1}\right\|_{L^{p}(M)\to L^{p'}(M)}
    \lesssim_{A,d,q} \frac{1}{d(z)},
    \qquad \Re(z)\le A,
\end{equation}
for any $A\ge 1$.
If $\Re(z^{1/m})\le \exp(1000m)$,
then the above implies inequality $\eqref{secondequation}$. 

\textbf{Step 2:}
We now consider \(z\notin\Xi\) with \(\Re(z^{1/m})> \exp(1000m)\) and prove \eqref{secondequation} in this regime.
To this end, it suffices to establish the following two estimates:

\begin{subequations}
        \begin{align}
        \label{eqithm-regionSogge}
\left\|
\boldsymbol{1}\!\left(
P^{1/m}\in
\bigl[\Re(z^\frac1m)-\tfrac12,\,
\Re(z^\frac1m)+\tfrac12\bigr]
\right)
\left(P-z\right)^{-1}
\right\|_{L^p(M)\to L^{p'}(M)}
&\lesssim d(z)^{-1}|z|^{\frac{2\nu(p')}{m}},
\\ \label{secondequationSogge}
\left\|
\boldsymbol{1}\!\left(
P^{1/m}\notin
\bigl[\Re(z^\frac1m)-\tfrac12,\,
\Re(z^\frac1m)+\tfrac12\bigr]
\right)
\left(P-z\right)^{-1}
\right\|_{L^p(M)\to L^{p'}(M)}
&\lesssim |z|^{\frac{2\nu(p')+1}{m}-1}.
\end{align} 
    \end{subequations}

\textit{Step 2.1:} We first prove the localized estimate \eqref{eqithm-regionSogge} by applying Sogge's spectral cluster bounds \eqref{Sogge1} and \eqref{Sogge2} with
\(\Upsilon=\Re(z^{1/m})\).

\begin{align}\label{Soggeequationchain}
\begin{split}
  &\bigl\|\mathbf{1}\bigl(P^{1/m}\in[\Re(z^\frac1m)-1/2,\Re(z^\frac1m)+1/2]\bigr)(P-z)^{-1}f\bigr\|_{L^{p'}(M)}
\\
\lesssim&
\Re(z^\frac1m)^{\nu(p')}
\bigl\|\mathbf{1}\bigl(P^{1/m}\in[\Re(z^\frac1m)-1/2,\Re(z^\frac1m)+1/2]\bigr)(P-z)^{-1}f\bigr\|_{L^2(M)}
\\
\lesssim&
d(z)^{-1}\Re(z^\frac1m)^{\nu(p')}
\bigl\|\mathbf{1}\bigl(P^{1/m}\in[\Re(z^\frac1m)-1/2,\Re(z^\frac1m)+1/2]\bigr)f\bigr\|_{L^2(M)}
\\
\lesssim&
d(z)^{-1}\Re(z^\frac1m)^{2\nu(p')}
\|f\|_{L^p(M)}.  
\end{split}
\end{align}
Since
$\Re(z^{1/m})\le |z|^{1/m}$,
we obtain the desired estimate \eqref{eqithm-regionSogge}.

\textit{Step 2.2:} We next prove the complementary estimate \eqref{secondequationSogge}.
We choose \(z_1\in\Gamma,\) hence $|\Im{(z_1^{1/m})}|=1$, such that
$\Re(z_1^{1/m})=\Re(z^{1/m})$,
and decompose the corresponding expression as follows:

\begin{subequations}
        \begin{align}
        \label{zeroJC}
&\bigl\|\mathbf{1}\bigl(P^{1/m}\notin [\Re(z^\frac1m)-1/2,\Re(z^\frac1m)+1/2]\bigr)(P-z)^{-1}f\bigr\|_{L^{p'}(M)}
\\ \label{eqithm-regionJC}
\le
&\bigl\|\mathbf{1}\bigl(P^{1/m}\notin [\Re(z^\frac1m)-1/2,\Re(z^\frac1m)+1/2]\bigr)(P-z_1)^{-1}f\bigr\|_{L^{p'}(M)}
\\ \label{secondequationJC}
+
&\bigl\|\mathbf{1}\bigl(P^{1/m}\notin [\Re(z^\frac1m)-1/2,\Re(z^\frac1m)+1/2]\bigr)
\bigl[(P-z)^{-1}-(P-z_1)^{-1}\bigr]f\bigr\|_{L^{p'}(M)}
\end{align} 
    \end{subequations}

We further estimate:
\begin{subequations}
    \begin{align} \label{I}
        \eqref{eqithm-regionJC} \le &\|(P-z_1)^{-1}f\|_{L^{p'}(M)}\\ \label{II}
        +
&\bigl\|\mathbf{1}\bigl(P^{1/m}\in [\Re(z^\frac1m)-1/2,\Re(z^\frac1m)+1/2]\bigr)(P-z_1)^{-1}f\bigr\|_{L^{p'}(M)}
    \end{align}
\end{subequations}

\textit{Step 2.2.1:} We estimate the term \eqref{I}. Since \(|\Im(z_1^{1/m})|=1\), the point \(z_1\) lies on the boundary curve \(\Gamma\), and hence we can bound the term \eqref{I} by the left hand side of \eqref{eqithm-region}.

\textit{Step 2.2.2:} We now bound the cluster term \eqref{II}.
For \(\tau\ge 0\), we use the real and imaginary parts of \(z_1\) to obtain
\begin{equation}\label{complexana}
|\tau^m-z_1|
\ge |\Im(z_1)|
= (\Re(z_1^{\frac{1}{m}})^2+1)^{m/2}\bigl|\sin(m\arctan(1/\Re(z_1^{\frac{1}{m}})))\bigr|
\gtrsim \Re(z_1^{\frac{1}{m}})^{m-1}
\gtrsim |z|^{1-\frac1m}.
\end{equation}
This implies 
\begin{equation}\label{compleximpl}
    \bigl\|\mathbf{1}\bigl(P^{1/m}\in [\Re(z^\frac1m)-1/2,\Re(z^\frac1m)+1/2]\bigr)(P-z_1)^{-1}f\bigr\|_{L^2(M)}
\lesssim |z|^{\frac{1}{m}-1}\,\|f\|_{L^2(M)}.
\end{equation}

Now $$\eqref{II}\le  |z|^{\frac{2\nu(p')+1}{m}-1}\|f\|_{L^{p}(M)}$$

follows again by the same line of argument as in inequality \eqref{Soggeequationchain}, where we exchange the estimate from the second to the third line by the estimate \eqref{compleximpl}.

\textit{Step 2.2.3:} 
It remains to bound the difference term \eqref{secondequationJC}.
By the resolvent identity,
$$\left(P-z\right)^{-1}-\left( P-z_1\right)^{-1}=(z-z_1)\left( P-z\right)^{-1}\left( P-z_1\right)^{-1},$$
inequality \eqref{secondequationJC} follows from the following estimate

  \begin{align} \label{subzero}
   \begin{split}
        &\bigl\|\mathbf{1}\bigl(P^{1/m}\notin [\Re(z^\frac1m)-1/2,\Re(z^\frac1m)+1/2]\bigr)
\bigl[(P-z)^{-1}\bigr]\bigr\|_{L^{2}(M)\to L^{p'}}\\ 
\times &\bigl\|
(P-z_1)^{-1}\bigr\|_{L^{p}(M)\to L^{2}}\\ 
\times &|z-z_1|\le|z|^{\frac{2\nu(p')+1}{m}-1},
   \end{split}
\end{align}  
which we prove in the following.

\textit{Step 2.2.3.1:} We first control the factor \(|z-z_1|\).
By Taylor expansion and the assumptions on \(z\) and \(z_1\), we have
\begin{equation}\label{complex2}
  |z-z_1|\lesssim \Re(z^\frac1m)^{m-1}\lesssim |z|^{1-\frac{1}{m}}.  
\end{equation}

\textit{Step 2.2.3.2:} We now prove the two remaining operator norm bounds appearing in \eqref{subzero}.
Using the assumptions on \(z\) and \(z_1\), we obtain
\begin{equation}\label{complex3}
    |\lambda_j^m-z_1|
\gtrsim
\bigl(\Re(z_1^{1/m})\bigr)^{m-1}
\Bigl(|\lambda_j-\Re(z_1^{1/m})|+1\Bigr).
\end{equation}
We can now follow the proofs of equations (2.12) and (2.13) in Cuenin \cite{CJC} line by line to get
\begin{align}\label{last4}
    \begin{split}
        &\bigl\|\mathbf{1}\bigl(P^{1/m}\notin [\Re(z^\frac1m)-1/2,\Re(z^\frac1m)+1/2]\bigr)
\bigl[(P-z)^{-1}\bigr]\bigr\|_{L^{2}(M)\to L^{p'}}\le |z|^{\frac{\nu(p')+1}{m}-1},\\
&\bigl\|
(P-z_1)^{-1}\bigr\|_{L^{p}(M)\to L^{2}}\le |z|^{\frac{\nu(p')+1}{m}-1},
    \end{split}
\end{align}
where we again use Sogge's spectral cluster bound \eqref{Sogge2}.
Combining \eqref{complex2} and \eqref{last4} yields \eqref{subzero}, and hence \eqref{secondequation}.
\end{proof}

\section{Spectral bounds. Proof of Theorem \ref{psithm1}}\label{sec7}
We follow the Birman--Schwinger argument used by Cuenin \cite{CJC} and adapt it
to the present pseudodifferential setting.

\begin{proof}
    
    As in \cite{CJC} Theorem \ref{psithm1} follows from the Birman-Schwinger principle, and Theorem \ref{eqithm-regionuation}.


The Birman--Schwinger principle states that \(z\) is an eigenvalue of
\(P+V\) if and only if \(-1\) is an eigenvalue of the compact
Birman--Schwinger operator
\[
K(z)=|V|^{1/2}(P-z)^{-1}\operatorname{sgn}(V)|V|^{1/2}.
\]
It follows that \(\|K(z)\|\ge 1\), and therefore
\begin{equation}
    1\le\|K(z)\|\le\|V\|_{L^q(M)}\|( P-z)^{-1}\|_{L^p(M)\to L^{p'}(M)},
\end{equation}
where we recall that $1/q=1/p-1/p'$.
Equations \eqref{eqithm-region} and \eqref{secondequation} yield
\begin{subequations}
\begin{align}\label{firstBS}
    |z|^{1-\frac1m} (1+|z|)^{-\frac{2\sigma(q)}{m}} &\lesssim \,\|V\|_{L^q(M)},\quad &\text{for}\quad z\in\Xi, \\
    \label{secondBS}
    d(z)&\lesssim (1+|z|)^{\frac{2\nu(p')}{m}}\|V\|_{L^q(M)},\quad &\text{for}\quad z\notin\Xi,
\end{align}  
\end{subequations}
respectively.
These two estimates naturally lead to a case distinction according to whether \(z\in\Xi\) or \(z\notin\Xi\).

\textbf{Step 1:} For \(z\in\Xi\), \eqref{firstBS} immediately yields
\begin{equation}\label{firstin}
    \left(\mathrm{spec}( P + V)\cap\Xi\right)
\subset \Bigl\{ z \in \mathbb{C} : |z|^{1-\frac1m} (1+|z|)^{-\frac{2\sigma(q)}{m}} \le C \,\|V\|_{L^q(M)} \Bigr\}.
\end{equation}

\textbf{Step 2:} For \(z\notin\Xi\), we distinguish between the cases \(\Re z\le 0\) and \(\Re z>0\).

\textit{Step 2.1:} We first consider the case $\Re(z)\le 0$. Since the maps \(\lambda\mapsto |(\lambda\pm i)^m|\) are increasing for \(\lambda\ge \cot(\pi/m)\), and $$\Gamma\cap i\R=\{\pm i\csc^m(\pi/(2m))\},$$ 
we have by \eqref{secondBS}: 
\begin{equation}\label{secondin}
    \left(\mathrm{spec}( P + V)\cap\Xi^c\cap\left\{z\in\C:\Re(z)\le 0\right\}\right)
\subset D(\lambda^m_0,Cr_0),
\end{equation}
where we recall that $\lambda_0=0$.

\textit{Step 2.2:} We now consider the case \(\Re(z)>0\).
Then, considering the eigenvalues without multiplicity, there exists a unique
\(k\in\mathbb N\) such that
\[
    \Re z\in[\lambda_{k-1}^m,\lambda_k^m).
\]
and hence \eqref{secondBS} implies that \(d(z)\lesssim r_k\). Moreover, using
$d(z)=\min\bigl\{|z-\lambda_{k-1}^m|,\ |z-\lambda_k^m|\bigr\}$
together with Weyl's asymptotic law,
$\lambda_n/\lambda_{n-1}\to 1$ as $n\to\infty,$
we obtain
\begin{equation}\label{thirdin}
\left(\operatorname{spec}\bigl( P+V\bigr)\cap\Xi^c\cap\{z\in\mathbb C:\Re(z)\in[\lambda_{k-1}^m,\lambda_k^m)\}\right)
\subset D(\lambda_{k-1}^m,Cr_{k-1})\cup D(\lambda_k^m,Cr_k),
\end{equation}
for all \(k\in\mathbb N\).

Upon setting \(\nu(p')=\sigma(q)\), the inclusions \eqref{firstin}, \eqref{secondin}, and \eqref{thirdin} combine to give \eqref{result}, and thus Theorem~\ref{psithm1} follows.

\end{proof}

\section{Optimality under Zoll-type spectral clustering: proof of Theorem~\ref{thmopt}}
\label{secopt}

The sharpness argument is based on spectral clustering for Zoll manifolds
and, more generally, for elliptic operators with periodic
bicharacteristic flow; see
\cite{Besse1978,Weinstein1977Clusters,DuistermaatGuillemin1975Periodic,ColinDeVerdiere1979Periodic}.
On the round sphere, the relevant spectral-cluster estimates are sharp
\cite{Sogge1986,SOGGE1988,CS}. Building on this structure, Cuenin
\cite{CJC} established the optimality, up to multiplicative constants, of
spectral-inclusion radii for complex Schrödinger operators on spheres and
Zoll manifolds. We adapt this mechanism to positive elliptic
pseudodifferential operators of order \(m\) satisfying the abstract
Zoll-type clustering assumptions of Theorem~\ref{thmopt}.

\begin{proof} The proof proceeds by constructing a potential \(V_k\) for which the
resonant part of the associated Birman--Schwinger family has a
distinguished characteristic value
\[
    z_{\mathrm r,0}
    =
    \Lambda_k^m
    +
    \eta_k e^{i\theta}k^{2\sigma(q)}.
\]
We then show that the nonresonant contribution is sufficiently small on a
suitable contour surrounding \(z_{\mathrm r,0}\). The operator-valued
Rouch\'e theorem of Gohberg and Sigal \cite[Theorem~2.2]{GS71} implies
that the full Birman--Schwinger family has a characteristic value \(z_k\)
enclosed by this contour. By the Birman--Schwinger principle, \(z_k\) is
an eigenvalue of \(P+V_k\) satisfying
\[
    z_k
    =
    \Lambda_k^m
    +
    \eta_k e^{i\theta}k^{2\sigma(q)}
    +
    O\left(
        \eta_k^2 k^{4\sigma(q)+1-m}
        +
        k^{m-1}\delta_k
    \right).
\]

\subsection{\texorpdfstring{The \(L^\infty\)-case}{The L-infinity case}}
We first treat the endpoint case \(q=\infty\). Then
\(p=p'=2\) and \(\sigma(\infty)=0\). By
\eqref{eq:sharp-cluster-lower}, \(\Pi_k\neq0\), so we may choose
\[
    \tau_k\in\operatorname{spec}(A)\cap I_k.
\]
The clustering assumption \eqref{eq:zoll-type-clustering} and the mean
value theorem give
\[
    \tau_k^m
    =
    \Lambda_k^m+O(k^{m-1}\delta_k).
\]
Taking the constant potential
\[
    V_k=\eta_k e^{i\theta},
\]
we obtain
\[
    \operatorname{spec}(P+V_k)
    =
    \operatorname{spec}(P)+\eta_k e^{i\theta}.
\]
Consequently,
\[
    z_k:=\tau_k^m+\eta_k e^{i\theta}
    =
    \Lambda_k^m+\eta_k e^{i\theta}
    +O(k^{m-1}\delta_k)
\]
is an eigenvalue of \(P+V_k\), and
\(\|V_k\|_{L^\infty(M)}=\eta_k\). This proves the assertion for
\(q=\infty\). We henceforth assume \(1<q<\infty\).

\subsection{The two error scales}
\label{sec:scales}

We introduce two quantities:
\begin{equation}
   S_k:=\eta_k k^{2\sigma(q)},
   \qquad
   W_k:=C_W k^{m-1}\delta_k,
   \label{eq:S-W}
\end{equation}
where $C_W>0$ will later be chosen sufficiently large.

The quantity $S_k$ is the desired eigenvalue displacement. The quantity
$W_k$ is the error caused by the fact that the cluster is not concentrated
at the single frequency $\Lambda_k$.

Define
\begin{equation}
   \alpha_k:=\frac{S_k}{k^{m-1}}
   =\eta_k k^{2\sigma(q)+1-m},
   \qquad
   \beta_k:=\frac{k^{m-1}\delta_k}{S_k}.
   \label{eq:alpha-beta}
\end{equation}
This implies the following relations:
\begin{equation}
\label{eq:error-scale-relations}
   \varepsilon_k=\alpha_k+\beta_k,
   \qquad
   \frac{W_k}{S_k}=C_W\beta_k.
\end{equation}
The convergence $\varepsilon_k\to0$ implies separately that
\begin{equation}
\label{eq:small-consequences}
   \alpha_k\to0,
   \qquad \beta_k\to0,
   \qquad W_k=o(S_k),
   \qquad
   \alpha_k\beta_k
   =\delta_k.
\end{equation}
It follows in particular that $\delta_k\to0$.

Finally, since $\Lambda_k^m\asymp k^m$,
\begin{equation}
   \frac{S_k}{\Lambda_k^m}
   \asymp \eta_k k^{2\sigma(q)-m}
   =\frac{\alpha_k}{k}\longrightarrow0.
   \label{eq:S-small-energy}
\end{equation}
Thus the desired displacement is small compared with the base energy
$\Lambda_k^m$, but large compared with the cluster width $W_k$.

\subsection{Choice of the weight}
We construct a normalized weight \(\varphi_k\) from an almost extremizer of
the spectral-cluster estimate and show that the resulting weighted projection
\(T_k=\varphi_k\Pi_k\varphi_k\) has a largest eigenvalue of the required size
\(k^{2\sigma(q)}\).

Since \(\Pi_k\) is an orthogonal projection,
\[
    \|\Pi_k\|_{L^p(M)\to L^{p'}(M)}
    =
    \|\Pi_k\|_{L^p(M)\to L^2(M)}^2.
\]
Consequently, \eqref{eq:sharp-cluster-lower} implies
\[
    \|\Pi_k\|_{L^p(M)\to L^2(M)}
    \gtrsim
    k^{\sigma(q)}.
\]
We may therefore choose \(f_k\in L^p(M)\), with
\(\|f_k\|_{L^p(M)}=1\), such that
\[
    \|\Pi_k f_k\|_{L^2(M)}
    \gtrsim
    k^{\sigma(q)}.
\]
Since
\[
    \frac1p=\frac12+\frac1{2q},
\]
we can factor \(f_k\) as
\[
    f_k=\varphi_k g_k,
\]
where
\[
    \varphi_k:=|f_k|^{p/(2q)},
    \qquad
    g_k:=
    \begin{cases}
        f_k/\varphi_k,&\varphi_k\neq0,\\
        0,&\varphi_k=0.
    \end{cases}
\]
The choice of the exponents gives
\[
    \|\varphi_k\|_{L^{2q}(M)}=1,
    \qquad
    \|g_k\|_{L^2(M)}=1.
\]
It follows from the construction that
\begin{equation}
\label{eq:weighted-projector-lower}
    \|\Pi_k\varphi_k\|_{L^2(M)\to L^2(M)}
    \geq
    \|\Pi_k\varphi_k g_k\|_{L^2(M)}
    \gtrsim
    k^{\sigma(q)}.
\end{equation}
Set
\[
    T_k:=\varphi_k\Pi_k\varphi_k,
\]
which is a positive finite-rank operator on \(L^2(M)\). By
\eqref{eq:weighted-projector-lower},
\begin{equation}
\label{eq:Tk-lower}
    \|T_k\|_{L^2(M)\to L^2(M)}
    =
    \|\Pi_k\varphi_k\|_{L^2(M)\to L^2(M)}^2
    \gtrsim
    k^{2\sigma(q)}.
\end{equation}
Thus, if
\[
    a_0(k)\geq a_1(k)\geq\cdots\geq0
\]
are the eigenvalues of \(T_k\), counted with multiplicity, then
\[
\begin{aligned}
a_0(k)
&=
\|\varphi_k \Pi_k \varphi_k\|_{L^2(M)\to L^2(M)}
=
\|\Pi_k \varphi_k\|_{L^2(M)\to L^2(M)}^{2}
\\
&\leq
\|\Pi_k\|_{L^p(M)\to L^2(M)}^{2}
\|\varphi_k\|_{L^{2q}(M)}^{2}
\lesssim
k^{2\sigma(q)},
\end{aligned}
\]
where the final inequality follows from the unit spectral-cluster estimate
\eqref{Sogge2} and \(\|\varphi_k\|_{L^{2q}(M)}=1\). Together with
\eqref{eq:Tk-lower}, this implies
\begin{equation}
\label{eq:a0-size}
    a_0(k)\asymp k^{2\sigma(q)}.
\end{equation}

\subsection{Birman--Schwinger reduction and resonant splitting}
In this step, we use the weight \(\varphi_k\) to define the potential and convert
the eigenvalue problem into a Birman--Schwinger characteristic-value problem.

We seek the potential in the form
\[
    V_k=\zeta\varphi_k^2,
\]
where \(\zeta\) will be chosen below.
For \(z\notin\operatorname{spec}(P)\), define
\[
    \mathcal A(z,\zeta)
    :=
    I+\zeta\varphi_k(P-z)^{-1}\varphi_k.
\]
By the Birman--Schwinger principle, \(z\) is an eigenvalue of
\(P+\zeta\varphi_k^2\) if and only if \(\mathcal A(z,\zeta)\) is not
invertible.

We decompose the weighted resolvent into a resonant and a nonresonant part:
\begin{equation}
\label{eq:resonant-splitting}
    \varphi_k(P-z)^{-1}\varphi_k
    =
    K_{\mathrm r}(z)+K_{\mathrm{nr}}(z),
\end{equation}
where
\begin{equation}
\label{eq:abstract-resonant-part}
    K_{\mathrm r}(z)
    :=
    \frac{T_k}{\Lambda_k^m-z}.
\end{equation}
The corresponding resonant operator-valued function is
\[
    \mathcal A_{\mathrm r}(z,\zeta)
    :=
    I+\zeta K_{\mathrm r}(z).
\]
Its characteristic values are
\begin{equation}
\label{eq:resonant-characteristic-values}
    z_{\mathrm r,j}(\zeta)
    =
    \Lambda_k^m+\zeta a_j(k).
\end{equation}
Choose
\begin{equation}
\label{eq:zeta0-choice}
    \zeta_{0,k}
    :=
    \frac{
        \eta_k e^{i\theta}k^{2\sigma(q)}
    }{
        a_0(k)
    }.
\end{equation}
By \eqref{eq:a0-size},
\begin{equation}
\label{eq:zeta0-size}
    |\zeta_{0,k}|\asymp\eta_k,
\end{equation}
and the distinguished characteristic value associated with \(a_0(k)\) is
\begin{equation}
\label{eq:resonant-zero}
    z_{\mathrm r,0}
    :=
    z_{\mathrm r,0}(\zeta_{0,k})
    =
    \Lambda_k^m
    +
     e^{i\theta}S_k.
\end{equation}

From now on, set
\[
    V_k:=\zeta_{0,k}\varphi_k^2,
    \qquad
    \mathcal A_k(z):=\mathcal A(z,\zeta_{0,k}),
    \qquad
    \mathcal A_{k,\mathrm r}(z)
    :=\mathcal A_{\mathrm r}(z,\zeta_{0,k}).
\]
Moreover,
\begin{equation}
\label{eq:potential-norm}
    \|V_k\|_{L^q(M)}
    =
    |\zeta_{0,k}|
    \|\varphi_k^2\|_{L^q(M)}
    =
    |\zeta_{0,k}|
    \asymp\eta_k.
\end{equation}

If \(v_j\) is an eigenvector of \(T_k\) corresponding to \(a_j(k)\), then
\begin{equation}
\label{eq:resonant-scalar-action}
    \mathcal A_{k,\mathrm r}(z)v_j
    =
    \frac{z-z_{\mathrm r,j}(\zeta_{0,k})}
         {z-\Lambda_k^m}v_j.
\end{equation}

\subsection{The local change of spectral parameter}
We pass from the order-\(m\) energy parameter \(z\) to the
first-order frequency parameter \(z^{1/m}\).

We shall work in the spectral window
\[
    |z-\Lambda_k^m|\leq 2S_k.
\]
Equation \eqref{eq:S-small-energy} implies
\[
    |z-\Lambda_k^m|=o(\Lambda_k^m).
\]
We may therefore use the holomorphic branch of \(z^{1/m}\) defined in a
neighborhood of the positive number \(\Lambda_k^m\). The fundamental theorem
of calculus along the line segment joining \(\Lambda_k^m\) and \(z\) gives
\begin{align}
\label{eq:mth-root-estimate}
    |z^{1/m}-\Lambda_k|
    &\lesssim
    \Lambda_k^{1-m}|z-\Lambda_k^m|
    \notag\\
    &\lesssim
    \eta_k k^{2\sigma(q)+1-m}
    =\alpha_k.
\end{align}
Thus
\[
    |z^{1/m}-\Lambda_k|=o(1).
\]
Since the intervals \(I_k\) are separated, the only spectral cluster of
\(A\) meeting a fixed neighborhood of \(z^{1/m}\) is the \(k\)-th one.

\subsection{Estimate of the nonresonant part}
In this step, we derive a uniform bound for the nonresonant part
\(K_{\mathrm{nr}}(z)\) of the weighted resolvent in the spectral region
considered above.

We first quantify the width of the \(k\)-th spectral cluster on the
\(P=A^m\) energy scale. Choose the constant \(C_W\) in \eqref{eq:S-W}
sufficiently large. If
\(\tau\in\operatorname{spec}(A)\cap I_k\), then
\eqref{eq:zoll-type-clustering} gives
\[
    |\tau-\Lambda_k|\leq\delta_k.
\]
Applying the mean value theorem to \(t\mapsto t^m\), there exists a point
\(\xi\) between \(\tau\) and \(\Lambda_k\) such that
\[
    \tau^m-\Lambda_k^m
    =
    m\xi^{m-1}(\tau-\Lambda_k).
\]
Since \(\tau\in I_k\), we have \(\xi\asymp\Lambda_k\asymp k\), and hence
\begin{equation}
\label{eq:energy-cluster-width}
    |\tau^m-\Lambda_k^m|
    \lesssim
    k^{m-1}\delta_k
    \leq
    W_k.
\end{equation}

We decompose the nonresonant part as
\[
    K_{\mathrm{nr}}(z)
    =
    K_{\mathrm{nr}}^{(1)}(z)
    +
    K_{\mathrm{nr}}^{(2)}(z),
\]
where
\begin{align}
    K_{\mathrm{nr}}^{(1)}(z)
    &:=
    \varphi_k\Pi_k
    \left(
        (P-z)^{-1}
        -
        \frac1{\Lambda_k^m-z}
    \right)
    \Pi_k\varphi_k,
    \label{eq:intracluster-remainder}
    \\
    K_{\mathrm{nr}}^{(2)}(z)
    &:=
    \varphi_k(1-\Pi_k)(P-z)^{-1}(1-\Pi_k)\varphi_k.
    \label{eq:outercluster-remainder}
\end{align}
The first term measures the error made by replacing the exact resolvent on
the \(k\)-th cluster by the constant denominator
\((\Lambda_k^m-z)^{-1}\), whereas the second term contains all spectral
clusters different from the \(k\)-th one.

\subsubsection{Estimate of \(K_{\mathrm{nr}}^{(1)}\)}
Let
\[
    d_k(z):=|\Lambda_k^m-z|.
\]
Suppose that
\[
    d_k(z)\geq 2W_k.
\]
For every
\(\tau\in\operatorname{spec}(A)\cap I_k\), equation
\eqref{eq:energy-cluster-width} gives
\[
\begin{aligned}
    |\tau^m-z|
    &\geq
    |\Lambda_k^m-z|
    -
    |\tau^m-\Lambda_k^m|
    \\
    &\geq
    d_k(z)-W_k
    \geq
    \frac{d_k(z)}{2},
\end{aligned}
\]
while
\[
    |\tau^m-z|
    \leq
    d_k(z)+W_k
    \leq
    \frac{3d_k(z)}{2}.
\]
Thus
\[
    |\tau^m-z|
    \asymp
    d_k(z)
\]
uniformly on the \(k\)-th cluster. Consequently,
\begin{align*}
    \left|
        \frac1{\tau^m-z}
        -
        \frac1{\Lambda_k^m-z}
    \right|
    &=
    \frac{
        |\tau^m-\Lambda_k^m|
    }{
        |\tau^m-z|\,|\Lambda_k^m-z|
    }
    \\
    &\lesssim
    \frac{W_k}{d_k(z)^2}.
\end{align*}
Since
\[
    \|\varphi_k\Pi_k\|_{L^2\to L^2}^2
    =
    \|\varphi_k\Pi_k\varphi_k\|_{L^2\to L^2}
    \lesssim
    k^{2\sigma(q)},
\]
the spectral theorem therefore yields
\begin{equation}
\label{eq:intracluster-bound}
    \|K_{\mathrm{nr}}^{(1)}(z)\|_{L^2(M)\to L^2(M)}
    \lesssim
    \frac{W_k k^{2\sigma(q)}}{d_k(z)^2}.
\end{equation}

\subsubsection{Estimate of \(K_{\mathrm{nr}}^{(2)}\)}
We next estimate the contribution of all clusters different from the
\(k\)-th one. By \eqref{eq:mth-root-estimate},
\[
    z^{1/m}
    =
    \Lambda_k+o(1),
\]
so the \(k\)-th cluster is the only spectral cluster that can be resonant
with \(z\). Repeating the proof of \eqref{secondequationSogge}, with
\(I_k\) as the deleted spectral window, gives
\begin{equation}
\label{eq:complementary-resolvent-sharpness}
    \left\|
        (1-\Pi_k)(P-z)^{-1}(1-\Pi_k)
    \right\|_{L^p(M)\to L^{p'}(M)}
    \lesssim
    |z|^{\frac{2\sigma(q)+1}{m}-1},
\end{equation}
throughout the region \(|z-\Lambda_k^m|\leq2S_k\).

For completeness, the denominator estimates underlying this bound are as
follows. Since \(z^{1/m}=\Lambda_k+o(1)\), and hence
\(|z|\asymp\Lambda_k^m\asymp k^m\),
we have, uniformly for \(\tau\notin I_k\),
\[
    |\tau^m-z|
    \gtrsim
    \begin{cases}
        k^m,
        & \tau\leq\Lambda_k/2,\\[0.4ex]
        k^{m-1}\bigl(1+|\tau-\Lambda_k|\bigr),
        & \tau\asymp k,\\[0.4ex]
        \tau^m,
        & \tau\geq2\Lambda_k.
    \end{cases}
\]

Moreover, equation \eqref{eq:complementary-resolvent-sharpness} becomes
\begin{equation}
\label{eq:nonresonant-resolvent-bound}
    \left\|
        (1-\Pi_k)(P-z)^{-1}(1-\Pi_k)
    \right\|_{L^p(M)\to L^{p'}(M)}
    \lesssim
    k^{2\sigma(q)+1-m}.
\end{equation}
Hence \eqref{eq:nonresonant-resolvent-bound}, together with Hölder's
inequality applied on both sides of the resolvent in
\eqref{eq:outercluster-remainder}, yields
\begin{equation}
\label{eq:outercluster-bound}
    \|K_{\mathrm{nr}}^{(2)}(z)\|_{L^2(M)\to L^2(M)}
    \lesssim
    k^{2\sigma(q)+1-m}.
\end{equation}
The factor \(k^{1-m}\) reflects the conversion from the first-order
frequency scale of \(A\) to the order-\(m\) energy scale of \(P=A^m\).

Combining \eqref{eq:intracluster-bound} and
\eqref{eq:outercluster-bound}, we conclude that
\begin{equation}
\label{eq:full-nonresonant-bound}
    \|K_{\mathrm{nr}}(z)\|_{L^2(M)\to L^2(M)}
    \lesssim
    \frac{
        W_k k^{2\sigma(q)}
    }{
        |\Lambda_k^m-z|^2
    }
    +
    k^{2\sigma(q)+1-m},
\end{equation}
whenever
\begin{equation}
\label{eq:nonresonant-bound-region}
    2W_k\leq|z-\Lambda_k^m|\leq2S_k.
\end{equation}

\subsection{Separation of the resonant characteristic values}
We use a Schatten bound to show that only finitely many resonant
characteristic values can lie close to the target value. This makes it
possible to choose a contour on which the inverse of the resonant operator
is uniformly controlled.

By the weighted Schatten spectral-cluster estimate of Frank and Sabin
\cite[Theorem~2 and Remarks~6 and~12]{FS17}, applied to the first-order
operator \(A=P^{1/m}\), we have
\begin{equation}
\label{eq:cluster-Schatten-assumption}
    \left\|
        \varphi\Pi_k\varphi
    \right\|_{\mathfrak S^{\beta(q)}(L^2(M))}
    \lesssim
    k^{2\sigma(q)}
    \|\varphi\|_{L^{2q}(M)}^2
\end{equation}
for every non-negative \(\varphi\in L^{2q}(M)\), where
\[
    \beta(q)
    :=
    \begin{cases}
    \displaystyle
    \frac{(d-1)q}{d-q},
    &
    \displaystyle
    1<q\leq\frac{d+1}{2},
    \\[1.4ex]
    \displaystyle
    2q,
    &
    \displaystyle
    \frac{d+1}{2}\leq q<\infty.
    \end{cases}
\]
Since $\|\varphi_k\|_{L^{2q}(M)}=1$,
equation
\eqref{eq:cluster-Schatten-assumption} yields
\[
    \left(
        \sum_{j=0}^{\infty}
        a_j(k)^{\beta(q)}
    \right)^{1/\beta(q)}
    =
    \|T_k\|_{\mathfrak S^{\beta(q)}(L^2(M))}
    \lesssim
    k^{2\sigma(q)}.
\]
Together with \(a_0(k)\asymp k^{2\sigma(q)}\), this implies
\[
    a_j(k)
    \lesssim
    (j+1)^{-1/\beta(q)}k^{2\sigma(q)}
    \lesssim
    (j+1)^{-1/\beta(q)}a_0(k),
    \qquad j\geq0.
\]
Define
\[
    t_j(k):=\frac{a_j(k)}{a_0(k)}.
\]
Then
\[
    0\leq t_j(k)\lesssim(j+1)^{-1/\beta(q)}.
\]
We may therefore choose an integer \(J\), independent of \(k\), such that
\begin{equation}
\label{eq:far-resonant-values}
    t_j(k)\leq\frac12,
    \qquad j\geq J,
\end{equation}
for every sufficiently large \(k\). Thus only the at most \(J\) indices
\(j<J\) can generate resonant characteristic values lying within distance
\(S_k/2\) of the target value.

\subsection{Choice of the contour with the crucial large fixed factor}

Equations \eqref{eq:S-W}, \eqref{eq:resonant-characteristic-values}, and
\eqref{eq:zeta0-choice} imply
\begin{equation}
   z_{\mathrm r,j}
   :=z_{\mathrm r,j}(\zeta_{0,k})
   =\Lambda_k^m+e^{i\theta}S_kt_j(k).
   \label{eq:resonant-ray}
\end{equation}
Thus all resonant values lie on the ray emanating from $\Lambda_k^m$ in the
direction $e^{i\theta}$, between the base point and the target value. Their
distance from the target value is the real number
\begin{equation}
   |z_{\mathrm r,0}-z_{\mathrm r,j}|
   =S_k(1-t_j(k)).
   \label{eq:radial-distances}
\end{equation}

Let $C_0>1$ be a fixed constant for the moment, and set
\begin{equation}
   \widetilde\varepsilon_k:=C_0\varepsilon_k.
   \label{eq:epsilon-tilde}
\end{equation}
Since $\varepsilon_k\to0$, we also have
$\widetilde\varepsilon_k\to0$.

Consider the interval of radii
\[
   \mathcal I_k
   :=[\widetilde\varepsilon_kS_k,
      (4J+4)\widetilde\varepsilon_kS_k].
\]
Divide $\mathcal I_k$ into $2J+1$ consecutive subintervals of equal length.
The at most $J$ numbers
\[
   S_k(1-t_j(k)),\qquad j<J,
\]
are the forbidden radii. Each forbidden radius can meet the closures of at
most two subintervals. Hence the pigeonhole principle provides a subinterval
whose closure contains no forbidden radius. Choose $r_k$ to be its midpoint.
Then there is a constant $c_J>0$, independent of $k$, such that
\begin{equation}
\widetilde\varepsilon_kS_k\leq r_k
\leq(4J+4)\widetilde\varepsilon_kS_k
\label{eq:radius-two-sided}
\end{equation}
and
\begin{equation}
   \operatorname{dist}\!\left(r_k,
      \{S_k(1-t_j(k)):j<J\}\right)
   \geq c_J\widetilde\varepsilon_kS_k.
   \label{eq:radius-separation-near}
\end{equation}

For $j\geq J$, the forbidden radius is at least $S_k/2$ by
\eqref{eq:far-resonant-values}. On the other hand, since
$\widetilde\varepsilon_k\to0$, we have $r_k\leq S_k/4$ for all sufficiently large
$k$. Hence the circle
\begin{equation}
   \Gamma_k:=\{z\in\mathbb C:|z-z_{\mathrm r,0}|=r_k\}
   \label{eq:Gamma}
\end{equation}
is separated from all resonant values:
\begin{equation}
   \operatorname{dist}\!\left(z,\{z_{\mathrm r,j}:j\geq0\}\right)
   \geq c\widetilde\varepsilon_kS_k,
   \qquad z\in\Gamma_k.
   \label{eq:all-separation}
\end{equation}

\subsection{The inverse norm of the resonant model}

The scalar formula \eqref{eq:resonant-scalar-action} gives, on the eigenspace
corresponding to $a_j(k)$,
\[
   \left\|
       \left.\mathcal A_{k,\mathrm r}(z)^{-1}
       \right|_{\ker(T_k-a_j(k))}
   \right\|
   =\frac{|z-\Lambda_k^m|}{|z-z_{\mathrm r,j}|}.
\]
On $\ker T_k$, the operator $\mathcal A_{k,\mathrm r}(z)$ is the identity.
For $z\in\Gamma_k$,
\[
   |z-\Lambda_k^m|
   \leq|z-z_{\mathrm r,0}|
       +|z_{\mathrm r,0}-\Lambda_k^m|
   =r_k+S_k\leq2S_k
\]
for all sufficiently large $k$. Together with \eqref{eq:all-separation},
this yields
\begin{equation}
   \sup_{z\in\Gamma_k}
   \left\|\mathcal A_{k,\mathrm r}(z)^{-1}\right\|_{L^2(M)\to L^2(M)}
   \leq\frac{C}{\widetilde\varepsilon_k}
   =\frac{C}{C_0\varepsilon_k}.
   \label{eq:Ar-inverse-final}
\end{equation}

The fixed factor $C_0$ will allow us to make the Rouch\'e perturbation
strictly smaller than one.

\subsection{Application of the operator-valued Rouch\'e theorem}
\label{sec:Rouche}

\subsubsection{The Gohberg--Sigal theorem}
We use the following operator-valued version of Rouch\'e's theorem
\cite[Theorem~2.2]{GS71}.

\begin{theorem}[Gohberg--Sigal]
\label{thm:GS}
Let $D\subset\mathbb C$ be a bounded domain with piecewise smooth,
positively oriented boundary. Let $F$ and $G$ be holomorphic
operator-valued functions on a neighborhood of $\overline D$, taking values
in the Fredholm operators of index zero. Suppose that $G$ is invertible on
$\partial D$ and that
\[
   \sup_{z\in\partial D}
   \left\|G(z)^{-1}(F(z)-G(z))\right\|<1.
\]
Then $F$ and $G$ have the same number of characteristic values in $D$,
counted with algebraic multiplicity.
\end{theorem}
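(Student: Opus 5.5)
The statement is the operator-valued Rouch\'e theorem of Gohberg and Sigal. I would prove it by a homotopy argument combined with the operator logarithmic residue theorem. For $s\in[0,1]$ put
\[
   F_s(z):=G(z)+s\bigl(F(z)-G(z)\bigr),\qquad z\in\overline D .
\]
Each $F_s$ is holomorphic near $\overline D$. Since $F-G=F_1-F_0$ is a difference of Fredholm operators of index zero, stability of the index under small perturbations along the path shows that $F_s(z)$ remains Fredholm of index zero; alternatively this follows from the factorization below. On $\partial D$ we have $G(z)^{-1}F_s(z)=I+sG(z)^{-1}(F(z)-G(z))$, which is invertible by a Neumann series because the perturbation has norm $<1$. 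Hence $F_s(z)$ is invertible for every $z\in\partial D$, with $\|F_s(z)^{-1}\|$ bounded uniformly in $(s,z)\in[0,1]\times\partial D$ by compactness.

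\textbf{Step 1 (finiteness).} By the analytic Fredholm theorem, a holomorphic index-zero Fredholm family that is invertible at one point of a connected open set is invertible off a discrete set there. Moreover $F_s^{-1}$ is meromorphic, and its principal parts have finite-rank coefficients. Applied on a neighbourhood of $\overline D$, where $F_s$ is invertible near $\partial D$, this gives that $F_s$ has only finitely many characteristic values in $D$.

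\textbf{Step 2 (logarithmic residue formula).} Define
\[
   N(s):=\frac{1}{2\pi i}\operatorname{tr}\oint_{\partial D}F_s'(z)F_s(z)^{-1}\,dz .
\]
The integrand is holomorphic near $\partial D$, so the trace should be read as the sum of the traces of the finite-rank residues at the interior poles, which by Cauchy's theorem equals the contour integral. I would show that $N(s)$ equals the number of characteristic values of $F_s$ in $D$, counted with algebraic multiplicity. For this I would use the local Gohberg--Sigal (Smith) factorization near a characteristic value $z_0$:
\[
   F_s(z)=E(z)\Bigl(P_0+\sum_{j=1}^r(z-z_0)^{k_j}P_j\Bigr)H(z).
\]
Here $E$ and $H$ are holomorphic and invertible near $z_0$, $P_1,\dots,P_r$ are mutually disjoint rank-one projections, $P_0=I-\sum_j P_j$, and $\sum_j k_j$ is by definition the algebraic multiplicity. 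Writing $D(z)$ for the middle factor, expand
\[
   F_s'F_s^{-1}=E'E^{-1}+ED'D^{-1}E^{-1}+EDH'H^{-1}D^{-1}E^{-1}.
\]
The first term is holomorphic. The second has residue $E(z_0)\bigl(\sum_j k_jP_j\bigr)E(z_0)^{-1}$, whose trace is $\sum_j k_j$. For the third I would note that its singular part has finite-rank coefficients. Using cyclicity of the trace on finite-rank operators (first conjugating by $E$, then comparing $D X D^{-1}$ with $X$ for holomorphic $X$), its residue has trace zero.

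\textbf{Step 3 (continuity and conclusion).} The integrand $F_s'F_s^{-1}$ depends continuously on $s$ in norm, uniformly on $\partial D$. To pass to traces I would choose a small circle around each characteristic value, or equivalently write $F_s=G(I+sK)$ and use the Riesz projection of the finite-rank singular part, to see that $s\mapsto N(s)$ is continuous. Since $N(s)$ is integer-valued on the connected set $[0,1]$, it is constant, and therefore $N(1)=N(0)$, i.e.\ $F$ and $G$ have the same number of characteristic values in $D$.

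The main obstacle is Step 2, especially the third term: one has to justify that the relevant residues are finite rank, so that the trace makes sense, and that its trace vanishes. Continuity of $N(s)$ in Step 3 needs some care for the same reason, since the trace is not norm-continuous in general. I would handle both by reducing to finite-dimensional compressions via the Riesz projections onto the finitely generated root subspaces.
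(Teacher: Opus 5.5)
The paper does not prove this statement; it imports it from \cite[Theorem~2.2]{GS71}. It applies it only to $F=\mathcal A_k$ and $G=\mathcal A_{k,\mathrm r}$, both of which are the identity plus a compact operator, so there every $F_s$ is automatically Fredholm of index zero. Your homotopy/logarithmic-residue strategy is the standard one, and Steps~1--2 are sound once their hypotheses hold. There is, however, a genuine gap at the outset. The analytic Fredholm theorem in Step~1 and the local Smith factorization in Step~2 both need $F_s(z)=G(z)+s(F(z)-G(z))$ to be Fredholm of index zero at \emph{interior} points $z\in D$, in fact on a connected neighbourhood of $\overline D$. Neither of your justifications gives this:
\begin{itemize}
\item Stability of the index under small perturbations only shows that $\{s: F_s(z)\text{ is Fredholm}\}$ is open in $[0,1]$, not that it is all of $[0,1]$. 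This argument never uses the smallness hypothesis, and without that hypothesis the claim is false: $G\equiv I$, $F\equiv -I$ gives $F_{1/2}\equiv 0$.
\item The factorization $G^{-1}F_s=I+sG^{-1}(F-G)$ helps only where $G$ is invertible \emph{and} $\|G^{-1}(F-G)\|<1$. That is assumed only on $\partial D$. Inside $D$, $G(z)$ may be singular, and near its characteristic values $\|G(z)^{-1}(F(z)-G(z))\|$ blows up.
\end{itemize}

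The missing idea is a maximum principle modulo compact operators.
\begin{itemize}
\item Let $\pi:\mathcal B(X)\to\mathcal B(X)/\mathcal K(X)$ be the Calkin map on the underlying Banach space $X$. Since $G(z)$ is Fredholm near $\overline D$, Atkinson's theorem makes $\Phi(z):=\pi(G(z))^{-1}\pi\bigl(F(z)-G(z)\bigr)$ holomorphic on a neighbourhood of $\overline D$.
\item On $\partial D$ one has $\|\Phi(z)\|=\|\pi\bigl(G(z)^{-1}(F(z)-G(z))\bigr)\|\le\|G(z)^{-1}(F(z)-G(z))\|$.
\item The norm of a Banach-algebra-valued holomorphic function is subharmonic, so $\sup_{\overline D}\|\Phi\|\le\sup_{\partial D}\|G^{-1}(F-G)\|<1$.
\item Hence $\pi(F_s(z))=\pi(G(z))\bigl(1+s\Phi(z)\bigr)$ is invertible for all $(s,z)\in[0,1]\times\overline D$. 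So $F_s(z)$ is Fredholm there, and by openness and compactness on a neighbourhood of $\overline D$, uniformly in $s$.
\item Its index is zero because the index is constant along the path $s\mapsto F_s(z)$, which starts at $G(z)$.
\end{itemize}

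With this inserted the argument goes through, except that Step~3 still needs a real proof of continuity. Riesz projections are not available for non-linear families. Instead, localize near each characteristic value $z_*$ of $F_{s_0}$ and pick a finite-rank $T$ with $F_{s_0}(z_*)+T$ invertible. Write $F_s=(F_s+T)\bigl(I-(F_s+T)^{-1}T\bigr)$. The local logarithmic residue of $F_s$ then equals the number of zeros of the scalar function $\det\bigl(I-(F_s+T)^{-1}T\bigr)$, which is locally constant in $s$ by Hurwitz's theorem. Finally, for $s$ near $s_0$ all characteristic values of $F_s$ in $\overline D$ lie in these small discs, so $N(s)$ is locally constant.
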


We apply the theorem with
\[
   F=\mathcal A_k,
   \qquad
   G=\mathcal A_{k,\mathrm r},
   \qquad
   \mathfrak D_k:=D(z_{\mathrm r,0},r_k),
   \qquad
   \partial\mathfrak D_k=\Gamma_k.
\]

\subsubsection{The contour lies in the resolvent set}
By \eqref{eq:radius-two-sided} and
$\widetilde\varepsilon_k=C_0\varepsilon_k\to0$, we have
$r_k=o(S_k)$. Hence, for every $z\in\overline{\mathfrak D_k}$,
\begin{equation}
\label{eq:on-disk-scale}
    |z-\Lambda_k^m|\asymp S_k.
\end{equation}
In particular, the pole $\Lambda_k^m$ of the resonant model lies outside
$\overline{\mathfrak D_k}$.

We claim that
\begin{equation}
\label{eq:disk-in-resolvent-set}
    \overline{\mathfrak D_k}\cap\operatorname{spec}(P)=\varnothing
\end{equation}
for all sufficiently large $k$. Indeed, suppose that
$\tau^m\in\operatorname{spec}(P)$ belongs to
$\overline{\mathfrak D_k}$. Applying \eqref{eq:mth-root-estimate} with
$z=\tau^m$ gives $\tau=\Lambda_k+o(1)$, and hence $\tau\in I_k$ for all
sufficiently large $k$. By \eqref{eq:energy-cluster-width},
\[
    |\tau^m-\Lambda_k^m|\leq W_k=o(S_k),
\]
which contradicts \eqref{eq:on-disk-scale}. This proves
\eqref{eq:disk-in-resolvent-set}.

It follows that $\mathcal A_k$ is a holomorphic compact perturbation of the
identity on a neighborhood of $\overline{\mathfrak D_k}$, whereas
$\mathcal A_{k,\mathrm r}$ is a holomorphic finite-rank perturbation of the
identity there. Both families are therefore Fredholm of index zero. Moreover,
\eqref{eq:all-separation} shows that $\mathcal A_{k,\mathrm r}$ is invertible
on $\Gamma_k$.

\subsubsection{Verification of the Rouch\'e inequality}
For $z\in\Gamma_k$, equations \eqref{eq:on-disk-scale} and
\eqref{eq:small-consequences} show that the conditions in
\eqref{eq:nonresonant-bound-region} hold for all sufficiently large $k$.
Using \eqref{eq:resonant-splitting}, \eqref{eq:zeta0-size},
\eqref{eq:full-nonresonant-bound}, \eqref{eq:Ar-inverse-final}, and
\eqref{eq:alpha-beta}--\eqref{eq:error-scale-relations}, we obtain
\begin{align}
&\sup_{z\in\Gamma_k}
\left\|
    \mathcal A_{k,\mathrm r}(z)^{-1}
    \bigl(\mathcal A_k(z)-\mathcal A_{k,\mathrm r}(z)\bigr)
\right\|_{L^2(M)\to L^2(M)}
\notag\\
&\quad=
\sup_{z\in\Gamma_k}
\left\|
    \mathcal A_{k,\mathrm r}(z)^{-1}
    \zeta_{0,k}K_{\mathrm{nr}}(z)
\right\|_{L^2(M)\to L^2(M)}
\notag\\
&\quad\lesssim
\frac{1}{C_0\varepsilon_k}\,
\eta_k
\left(
    \frac{W_k k^{2\sigma(q)}}{S_k^2}
    +k^{2\sigma(q)+1-m}
\right)
\notag\\
&\quad=
\frac{1}{C_0\varepsilon_k}
\left(
    \frac{W_k}{S_k}
    +\eta_k k^{2\sigma(q)+1-m}
\right)
\leq
\frac{C}{C_0}.
\label{eq:Rouche-small}
\end{align}
Choose the fixed constant $C_0>2C$. Then the right-hand side of
\eqref{eq:Rouche-small} is strictly smaller than $1$. Since $C_0$ is
independent of $k$, this choice does not alter the asymptotic order of the
contour radius.

\subsection{Conclusion of the proof}
All hypotheses of Theorem~\ref{thm:GS} are now verified. The resonant model
has the characteristic value $z_{\mathrm r,0}$ at the center of
$\mathfrak D_k$. Theorem~\ref{thm:GS} therefore implies that
$\mathcal A_k$ has at least one characteristic value $z_k$ in
$\mathfrak D_k$. By \eqref{eq:disk-in-resolvent-set}, the
Birman--Schwinger principle applies and gives
\[
    z_k\in\operatorname{spec}(P+V_k).
\]
Equations \eqref{eq:resonant-zero}, \eqref{eq:radius-two-sided},
\eqref{eq:epsilon-tilde}, and \eqref{eq:sharpness-smallness} yield
\begin{align*}
    |z_k-z_{\mathrm r,0}|
    &\leq r_k
    \lesssim
    \varepsilon_k S_k
    =
    \varepsilon_k\eta_k k^{2\sigma(q)}\\
    &=
    \eta_k^2k^{4\sigma(q)+1-m}
    +k^{m-1}\delta_k.
\end{align*}
Consequently,
\[
    z_k
    =
    \Lambda_k^m
    +\eta_ke^{i\theta}k^{2\sigma(q)}
    +O\!\left(
        \eta_k^2k^{4\sigma(q)+1-m}
        +k^{m-1}\delta_k
    \right),
\]
which is \eqref{eq:abstract-sharpness-conclusion}. Together with
\eqref{eq:potential-norm}, this completes the proof of
Theorem~\ref{thmopt}.
\end{proof}

\section{Optimality of fractional Schrödinger operator on Zoll manifolds}\label{sec: optimality laplace}

\subsection{Proof of Corollary \ref{cor:sharpness-sphere}}

\begin{proof}
For the round sphere,
\[
    \Lambda_k=\sqrt{k(k+d-1)}
\]
is an exact eigenfrequency of \(\sqrt{-\Delta_{\mathbb S^d}}\), so
\[
    \delta_k=0.
\]
The sharp projector bound follows from the sharpness of Sogge's spectral
cluster estimates, see e.g. \cite{CS}. The condition
\[
    \eta k^{2\sigma(q)+1-m}\ll1
\]
is equivalent, up to constants, to
\[
    k\gg
    1+\eta^{1/(m-1-2\sigma(q))}.
\]
The conclusion now follows from Theorem~\ref{thmopt}.
\end{proof}

\subsection{Proof of Corollary \ref{cor:sharpness-zoll}}
\begin{proof}
Weinstein's spectral clustering theorem
\cite{Weinstein1977Clusters} gives
\[
    \Lambda_k=k+\alpha,
    \qquad
    \delta_k=O(k^{-1}).
\]
Hence
\[
    k^{m-1}\delta_k=O(k^{m-2}).
\]
The two terms in \eqref{eq:sharpness-smallness} become
\[
    \eta k^{2\sigma(q)+1-m}
\]
and
\[
    \frac{k^{m-2}}{\eta k^{2\sigma(q)}}
    =
    \frac1{\eta k^{2\sigma(q)+2-m}},
\]
respectively. Thus \eqref{eq:zoll-two-conditions} is precisely the condition
that \(\varepsilon_k=o(1)\).

The sharp lower cluster bound follows from the cluster multiplicity
\[
    \operatorname{rank}\Pi_k\asymp k^{d-1},
\]
the local Weyl law, and the standard \(TT^*\) argument; see
\cite[Section~3.2]{CJC}. Theorem~\ref{thmopt} now gives
\eqref{eq:zoll-sharpness-final}.
\end{proof}

\vspace{0.4cm}

\section*{Acknowledgments}
This research was funded in whole, or in part, by the Austrian Science Fund (FWF)  [Grant-DOI 
10.55776/P35322, 10.55776/PAT5120424, and 
10.55776/PAT4632823]. 

The author is grateful to Christoph Aistleitner, Andrei Shubin, Petr Siegl, Jean-Claude Cuenin, and Konstantin Merz for many helpful discussions. He would like to express his special thanks to Jean-Claude Cuenin and Konstantin Merz for their particularly valuable support and insights.

\bibliographystyle{abbrv}
\bibliography{main}

\end{document}